\documentclass[12pt,a4paper]{article} 
\usepackage{amsmath,amsthm,amsfonts,amssymb,amscd}
\usepackage{bbm}
\usepackage{enumerate,enumitem}
\usepackage{url,cite}
\usepackage{fullpage,fancyhdr}
\usepackage[subnum]{cases}
\usepackage{color,float,soul}
\usepackage{graphicx}
\usepackage[margin=1in]{geometry}

\parskip 5pt
\parindent  5mm
\tolerance  4000
\pagestyle{fancy} \fancyhead{} \fancyfoot{} \fancyfoot[C]{\thepage}

\newcommand{\fenv}[1]%
{\ensuremath{\,\overrightarrow{\operatorname{env}}_{#1}}}
\newcommand{\benv}[1]%
{\ensuremath{\,\overleftarrow{\operatorname{env}}_{#1}}}
\theoremstyle{plain}
\newtheorem{theorem}{Theorem}[section]
\newtheorem{lemma}[theorem]{Lemma}
\newtheorem{corollary}[theorem]{Corollary}

\newtheorem{definition}{Definition}[section]

\newtheorem{example}{Example}[section]

\usepackage{cite}

\newcommand{\Gph}{{\rm\textbf{Gph}\,}}

\newcommand{\nexto}{\kern -0.54em}

\newcommand{\dZ}{{\cal Z \kern -0.7em Z}}
\newcommand{\dC}{{\rm\hbox{C \kern-0.8em\raise0.2ex\hbox{\vrule height5.4pt width0.7pt}}}}
\newcommand{\dQ}{{\rm\hbox{Q \kern-0.85em\raise0.25ex\hbox{\vrule height5.4pt width0.7pt}}}}

\usepackage{epsfig}
\usepackage{latexsym}



\newcommand{\RR}{\mathbb{R}}

\newcommand{\mat}[1]

\date{}
\begin{document}
\title{\Large\textrm Polytropic Dynamical Systems with Time Singularity}
\author{Oday Hazaimah 
\footnote{ Northern Illinois University, {\tt odayh982@yahoo.com}. https://orcid.org/0009-0000-8984-2500.} 
}
\maketitle
\begin{abstract}
In this paper we consider a class of second order singular homogeneous differential equations called the Lane-Emden-type with time singularity in the drift coefficient. Lane-Emden equations are singular initial value problems that model phenomena in astrophysics such as stellar structure and are governed by polytropics with applications in isothermal gas spheres. A hybrid method that combines two simple methods; Euler's method and shooting method, is proposed to approximate the solution of this type of dynamic equations. We adopt the shooting method to reduce the boundary value problem, then we apply Euler's algorithm to the resulted initial value problem to get approximations for the solution of the Lane-Emden equation. Finally, numerical examples and simulation are provided to show the validity and efficiency of the proposed technique, as well as the convergence and error estimation are analyzed. 
		
\end{abstract}
\noindent {\textbf{Keywords}:}  Nonlinear Lane-Emden equation; Euler's method; Shooting method; Polytropes; Dynamics, Singularities.
	
	\noindent \textbf{\small Mathematics Subject Classification:}{ 34B15, 34B16, 34N05, 65L05, 65L10, 65L11.}
	
	\maketitle
	
\section{\textbf{Introduction}}
Laplace's equation and Poisson's equation are important examples of elliptic partial differential equations which used broadly in applied mathematics and theoretical physics, see, e.g., \cite{PeterOlver}. For instance, Poisson's equation used to calculate gravitational field in potential theory and can be seen as generalization of Laplace's equation. By removing or reducing dimensions from Poisson's equation, we obtain a second-order nonlinear differential equation called Lane-Emden-type equation (LE, for short). The Lane-Emden equation (a.k.a. polytropic dynamic equation) is one of the well studied classical dynamical systems that has many applications in nonlinear mathematical physics and non-Newtonian fluid mechanics (see, for instance, \cite{Ali,Asadpour,Bile,Datt,Davis,KouroshAmin,sand}). A preliminary study on the LE equations (polytropic and isothermal) was undertaken by astrophysicists Lane (1870) and Emden (1907), such that the interest of the LE derived from its nonlinearity and singular behavior at the origin. The point $x_0$ is called ordinary point (or regular point) of the dynamic equation \eqref{lane-emden2} if the coefficients of $x,x'$ are analytic in an interval about $x_0.$ Otherwise, it is called singular point. In solving singular boundary value problems (BVPs) some numerical techniques are based on the idea of replacing a two-point BVP by two suitable initial value problem \cite{Herron, KouroshAmin, Russ}. In this paper we adopt such idea (called the shooting method) to study dynamical models that play an essential role in the theory of star structure and evolutions, thermodynamics, and astrophysics (see, e.g., \cite{ColliTee}). Equation \eqref{lane-emden1} describes and models the mechanical structure of a spherical body of gas such as a self-gravitating star and also appeared in the study of stellar dynamics (see, for instance \cite{Chandra,Davis} and the references therein).
	The solutions to the LE, which are known as polytropes, are functions of density versus the radius expressed by $x(t)$ in \eqref{lane-emden2}. The index $n$ determines the order of that solution. Nonlinear singular LE equations can be formulated as 
	\begin{equation}\label{lane-emden1}
		\frac{1}{t^2}\frac{d}{dt}(t^2\frac{d x}{d t})+x^n=0 
	\end{equation} or,  
	\begin{equation}\label{lane-emden2}
		x''(t)+\frac{2}{t}x'(t)+[x(t)]^n=0, \ n\geq 0
	\end{equation} subject to $$x(0)=1 \ , \ x'(0)=0.$$
	The dynamical system model \eqref{lane-emden2} along with initial conditions form a special type of initial value problems (IVP) for which it has several applications in the fields of celestial mechanics, quantum physics and astrophysics \cite{Bile,Datt,Herron,sand}. 
	The following figure is a motivation example shows finite solutions of Lane-Emden equation for the value of $n$ in equation \eqref{lane-emden1} or \eqref{lane-emden2} given by $n=0,1 ,2 ,3 ,4 ,5, 6.$
	
	\begin{center}
		\includegraphics[width=.55\textwidth]{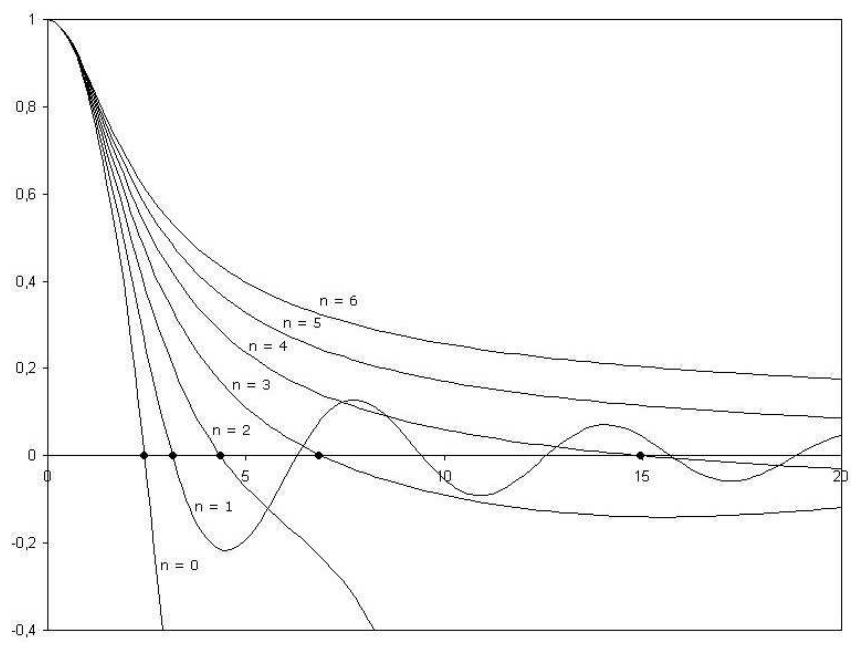}
	\end{center}
	
	For some special cases when $n=0,1,5$ exact analytical solutions were obtained by Chandrasekhar \cite{Chandra}, while for all other values of $n$ approximate analytical methods were obtained such as: the Adomian decomposition method \cite{Motsa,Wazwaz}, homotopy analysis method \cite{Bataineh}, power series expansions \cite{Karimi}, variational method \cite{He}, and linearization techniques \cite{Ramos} (provide accurate closed-form solutions around the singularity.). 
	Numerical discretization for equation \eqref{lane-emden1} has been the object of several studies in the last decades (see, e.g., \cite{Jedara,Ali,Asadpour, Bile, Datt, Kadal, KouroshAmin, Russ, sand} and the references therein). In \cite{Karimi}, the authors presented numerical method for solving singular IVPs by converting Lane-Emden-type equation \eqref{lane-emden1} to an integral operator form then rewriting the acquired Voltera integral equation in terms of a power series. Ramos \cite{Ramos} applied linearization method for the numerical solution of singular initial value problems of linear and nonlinear, homogeneous and nonhomogeneous second-order dynamic equations. 
	Russell and Shampine in \cite{Russ} discussed the solution of the singular nonlinear BVP for certain dynamical systems in the context of analytical geometry and symmetry as follows
	\begin{equation}
		\label{RussDE}
		x''(t)+\frac{k}{t}x'(t)+g(t,x)=0, \ \ \text{where} \ k=0,1,2,
	\end{equation} 
	and with boundary conditions $x'(0)=0$ (\text{or equivalently} $x(0)$ \text{is finite}),\ $x(b)=\lambda,$\ \text{for some scalar}\ $\lambda,$ and the convergence is uniform over the interval $[0,1]$.
	Biles et al. in \cite{Bile}, have considered an initial value problem for Lane-Emden type of the form 
	\begin{equation}
		x''(t)+p(t)x'(t)+q(t,x(t))=0, \ t>0\ \ \text{with}\ \ x(0)=a , x'(0)=0
		\label{Biles}
	\end{equation} 
	where $a\in\mathbb{R}$ and $p(t)$ may be singular at $t=0.$ They introduced the following definition and theorem, respectively; where the theorem gives the conditions of existence and uniqueness of solution of second-order linear BVPs. 
	\begin{definition}[\cite{Bile}] $x$ is a solution of the above equation \eqref{Biles} if and only if there exist some $T>0,$\  such that $x,x'$ are absolutely continuous on [0,T]. 
	\end{definition}
	\begin{theorem}[\cite{Bile}] Suppose in the above equation \eqref{Biles} $p$ is measurable on [0,1], non-negative on (0,1] and $\int_0^1 sp(s) ds$ is finite, and $q$ is bounded. Specifically, suppose there exist $\alpha,\beta$ with $\alpha<a<\beta$ and $K>0$ such that: 
		\begin{enumerate}
			\item for each $t\in[0,1]$, $q \in C\big([\alpha,\beta]\big)$; and $q$ is Lipschitz in $y$ on $[\alpha,\beta]$
			\item for each $x \in [\alpha,\beta], \ q$ is measurable on [0,1]; and 
			\item $\sup\limits_{(t,x) \in [0,1]\times[\alpha,\beta]}$ $|q(t,x)|\leq K$. 
			\item Suppose that $q$ is Lipschitz in $y$ on $[\alpha,\beta]$. 
			Then equation \eqref{Biles} has a unique solution.
		\end{enumerate}
	\end{theorem}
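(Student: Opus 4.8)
The plan is to recast \eqref{Biles} as an equivalent fixed-point equation and then invoke the Banach contraction principle on a suitable closed subset of $C[0,T]$ for $T>0$ small. The first step is to introduce the integrating factor $\mu(t)=\exp\!\big(-\int_t^1 p(s)\,ds\big)$, which is well defined and nondecreasing on $(0,1]$ since $p\ge 0$ there, satisfies $0<\mu\le 1$ with $\mu(1)=1$, and obeys $\mu'=\mu p$ a.e. Because $\int_0^T\mu p=\mu(T)-\lim_{t\to0^+}\mu(t)<\infty$, the factor $\mu$ is absolutely continuous on $[0,T]$ even when $\int_0^1 p$ diverges. Multiplying the ODE by $\mu$ turns the left-hand side into $(\mu x')'$, and using $x'(0)=0$ together with $\mu x'$ being absolutely continuous with $(\mu x')(0)=0$, one integrates twice to obtain that $x$ solves \eqref{Biles} on $[0,T]$ in the sense of the preceding Definition if and only if
\begin{equation}\label{fixedpt}
 x(t)=a-\int_0^t\frac{1}{\mu(r)}\int_0^r\mu(s)\,q\big(s,x(s)\big)\,ds\,dr=:(\mathcal Tx)(t),\qquad t\in[0,T].
\end{equation}
The converse direction of this equivalence is where the hypothesis $\int_0^1 sp(s)\,ds<\infty$ enters: from \eqref{fixedpt} one gets $|x'(t)|\le Kt$, hence $x\in C^1[0,T]$ with $x'(0)=0$, and then $|x''(t)|\le p(t)\,|x'(t)|+K\le Kt\,p(t)+K\in L^1[0,T]$, so $x'$ is absolutely continuous and the ODE holds a.e.

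The second step is to set up the contraction. Put $\rho:=\min\{a-\alpha,\beta-a\}>0$ and
\[
 X:=\big\{x\in C[0,T]\ :\ x(0)=a,\ |x(t)-a|\le\rho\ \text{for all }t\in[0,T]\big\},
\]
a closed, hence complete, subset of $C[0,T]$ equipped with the sup norm. Any $x\in X$ satisfies $x(t)\in[\alpha,\beta]$, so the bound $|q|\le K$ of condition (iii) and the Lipschitz estimate $|q(t,x_1)-q(t,x_2)|\le L|x_1-x_2|$ of conditions (i)/(iv) are available along $x$. Since $\mu$ is nondecreasing, $\frac{1}{\mu(r)}\int_0^r\mu(s)\,ds\le r$, and therefore for $x,y\in X$ one has $|(\mathcal Tx)(t)-a|\le K\int_0^t r\,dr=\tfrac{K}{2}t^2\le\tfrac{K}{2}T^2$ and $|(\mathcal Tx)(t)-(\mathcal Ty)(t)|\le\tfrac{L}{2}T^2\,\|x-y\|_\infty$.

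The third step is to choose $T>0$ small enough that $\tfrac{K}{2}T^2\le\rho$ and $\tfrac{L}{2}T^2<1$; then $\mathcal T\colon X\to X$ is a contraction, so by the Banach fixed-point theorem it has a unique fixed point $x^\ast\in X$, which by the equivalence of step one is the unique solution of \eqref{Biles} on $[0,T]$ in the required class. Uniqueness in the sense of the Definition, over possibly different intervals $[0,T_1]$ and $[0,T_2]$, then follows by a standard connectedness argument: two solutions coincide on a neighbourhood of $0$ by the contraction estimate applied on a short interval where both remain in $[\alpha,\beta]$, and the set where they agree is open and closed in their common domain.

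I expect the main obstacle to be the rigorous justification of the integral reformulation \eqref{fixedpt} at the singular endpoint $t=0$: verifying that $\mu$ and $\mu x'$ are absolutely continuous up to $0$, that the boundary term $(\mu x')(0)$ vanishes, and that $x''$ genuinely lies in $L^1[0,T]$, which is exactly the role of the integrability assumption $\int_0^1 sp(s)\,ds<\infty$. Once \eqref{fixedpt} is in hand, the contraction-mapping part is routine.
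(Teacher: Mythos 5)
Your proposal is essentially correct, but note that the paper itself contains no proof of this statement: it is quoted as background from Biles, Robinson and Spraker \cite{Bile}, so there is no in-text argument to compare against, and your write-up has to be judged on its own merits. On those merits it holds up: the reduction of \eqref{Biles} to the singular Volterra equation via the integrating factor $\mu(t)=\exp\bigl(-\int_t^1 p(s)\,ds\bigr)$, followed by Banach's fixed-point theorem on the tube $|x(t)-a|\le\rho$, is the standard route (and is in the spirit of the integral-equation formulation used in the original reference). The key estimates check out: $\mu$ is nondecreasing and extends to an absolutely continuous function on $[0,T]$ since $\mu'=\mu p$ a.e.\ with $\int_0^T\mu p\le\mu(T)$; the monotonicity bound $\mu(r)^{-1}\int_0^r\mu(s)\,ds\le r$ produces the factor $\tfrac12 T^2$ in both the invariance and contraction estimates; and the hypothesis $\int_0^1 sp(s)\,ds<\infty$ enters exactly where you say, giving $|x''(t)|\le Ktp(t)+K\in L^1[0,T]$ and hence absolute continuity of $x'$ up to the singular endpoint, so the fixed point is a solution in the sense of the quoted Definition. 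Two routine points deserve tightening: (a) in the continuation/uniqueness step the set on which two solutions agree should be the set where the pair $(x,x')$ agrees; openness of that set follows from Carath\'eodory--Lipschitz uniqueness at regular points $t^*>0$, which is legitimate because $\int_\delta^1 p(s)\,ds\le\delta^{-1}\int_\delta^1 sp(s)\,ds<\infty$ for every $\delta>0$, so $p$ is locally integrable away from the singularity; and (b) you should record that $s\mapsto q(s,x(s))$ is measurable for continuous $x$, which is precisely what the Carath\'eodory-type hypotheses (measurability in $t$, continuity in $x$) guarantee. With these remarks added, your argument is a complete proof of the cited theorem.
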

	Our paper is organized in the following fashion. In section 2, we provide some necessary notations and essential background. In section $3$ we present the second-order dynamical system of Lane-Emden type, and the BVP is transformed to IVP by shooting method. Then applying Euler's method on the resulted initial value problem to get approximations for the solution of the LE. The convergence results and error estimation are analyzed in section 4. Finally, numerical examples are provided to demonstrate the validity and efficiency of the proposed technique.

	\section{Preliminaries}
	In this section we introduce some basic definitions and conventional notations. Let $C^1(I)$ be the space of all continuously differentiable functions defined on an interval $I$. A set $D$ in the Euclidean space $\mathbb{R}^n$ is compact set if and only if it is closed and bounded set. The basic space used throughout this paper is the space of continuous functions $C[0,1]$ on the compact set 
	$[0,1]$ with the associated norm (distance) function defined by, $$\|x\|=\max_{0\leq t\leq1}|x(t)|.$$ 
	Define a continuous function $f:D\to\mathbb{R}^n$ where $D$ is an open subset of $\mathbb{R}^{n+1},$ and consider the dynamical system 
\begin{equation}
	\label{ode}
	\dot{x}(t)=f(t,x)\ ,\hspace{.7cm} x(t_0)=x_0.
\end{equation}
Given $(t_0,x_0) \in D$, a continuous function $x(t)$ in an open interval $(a,b)$ containing $t_0$ is a solution of the IVP \eqref{ode} if and only if $$x(t)=x_0+ \int_{t_0}^{t}f(s,x(s)) \ ds$$ for every $t \in (a,b)$.
Conventionally, most of dynamic evolution equations of this type \eqref{ode} arising in application-driven aspects cannot be solved algebraically or exactly, but they can be investigated qualitatively without knowing the exact solutions. As we know, qualitative approaches are not very accurate, hence, an approximate solution (more accurate) of this dynamic equation \eqref{ode} can be obtained by successive approximations methods. We say $f$ is differentiable function if its graph $\Gph f:=\{(t,x(t));\ t\in (a,b)\}$ has a slope defined at every point $t$ in the interval $(a,b)$. 
\begin{definition} 
	Let $D$ be a nonempty set. Suppose there is a function $f$ from $D$ to itself, and $0\leq L<1,$ where $L$ is free of $x$ and $y$. If for any two points $x,y\in D$ we have $$|f(x)-f(y)|\leq L|x-y| \ , \ \forall \ x,y\in D,$$ then $f$ is called a contraction. The smallest such value of $L$ is called the Lipschitz constant of $f$, and $f$ is then called a Lipschitz function.
\end{definition}
\begin{definition}
	A function $f:D\subset\RR^{n+1}\to\RR^n$ is said to be \textit{locally Lipschitz} in $x$ if for each compact set contained in $D$, and each $x, y \in D,$ there exists $L>0$ such that $$\|f(t,x)-f(t,y)\|\leq L\|x-y\|.$$
\end{definition}
In particular, all $C^1$ functions are locally Lipschitz. The following two theorems address existence and uniquness of solutions to any IVP. 
\begin{definition} A sequence ${x_n(t)}$ of functions in $C[a, b]$ converges uniformly to a function $x(t)\in C[a, b]$ if and only if $\displaystyle\lim_{n\to\infty}\|x_n-x\|=0.$
\end{definition}
\begin{theorem}\label{PicLind}
	(Picard-Lindelof theorem). If the function $f : D \rightarrow \mathbb{R}^n$ is continuous and locally Lipschitz in $x$ in an open set $D \subset \mathbb{R}^{n+1} $ , then for each $(t_0 , x_0) \in D$ there exists a unique solution of the initial value problem in some open interval containing $t_0$.
\end{theorem}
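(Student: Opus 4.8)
The plan is to recast the initial value problem as a fixed point problem for an integral operator and then invoke the Banach contraction principle. Since the equivalence between \eqref{ode} and the Volterra integral equation $x(t)=x_0+\int_{t_0}^{t}f(s,x(s))\,ds$ has already been recorded, the first step is localization. Because $D$ is open and $(t_0,x_0)\in D$, I would choose $a,b>0$ so that the closed cylinder $R=[t_0-a,t_0+a]\times\{x:\|x-x_0\|\le b\}$ lies in $D$. Continuity of $f$ on the compact set $R$ yields a bound $M=\max_{R}\|f\|$, and the local Lipschitz hypothesis furnishes an $L>0$ with $\|f(t,x)-f(t,y)\|\le L\|x-y\|$ for $(t,x),(t,y)\in R$. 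Setting $h=\min\{a,\,b/M\}$ and $I=[t_0-h,t_0+h]$, I would work in $X=\{x\in C(I,\mathbb{R}^n):\|x(t)-x_0\|\le b\ \forall\,t\in I\}$, a closed subset of the Banach space $(C(I,\mathbb{R}^n),\|\cdot\|)$ and hence itself a complete metric space.

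Next I would define $(Tx)(t)=x_0+\int_{t_0}^{t}f(s,x(s))\,ds$ and check that $T$ maps $X$ into $X$: for $x\in X$ the integrand is continuous and bounded by $M$, so $\|(Tx)(t)-x_0\|\le M|t-t_0|\le Mh\le b$. The crux is obtaining a contraction. The direct estimate gives only $\|(Tx)(t)-(Ty)(t)\|\le L|t-t_0|\,\|x-y\|$, which is a genuine contraction merely after further shrinking $h$ so that $Lh<1$; to avoid weakening the conclusion I would instead iterate, proving by induction that $\|(T^{k}x)(t)-(T^{k}y)(t)\|\le \frac{(L|t-t_0|)^{k}}{k!}\|x-y\|$, whence $\|T^{k}x-T^{k}y\|\le\frac{(Lh)^{k}}{k!}\|x-y\|$, and since $(Lh)^{k}/k!\to 0$ some power $T^{k}$ is a contraction. (Equivalently, one could equip $C(I,\mathbb{R}^n)$ with the Bielecki norm $\|x\|_{*}=\sup_{t\in I}e^{-2L|t-t_0|}\|x(t)\|$, under which $T$ itself is a contraction with constant at most $\tfrac12$.) Either way the Banach fixed point theorem produces a unique $x^{*}\in X$ with $Tx^{*}=x^{*}$, i.e.\ a solution of \eqref{ode}, and the interval asserted in the theorem can be taken to be $(t_0-h,t_0+h)$.

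Finally I would upgrade this to uniqueness among \emph{all} solutions, not just those confined a priori to the tube $X$. If $x$ is any solution near $t_0$, a continuity argument shows $x(t)$ cannot exit the ball $\{\|x-x_0\|\le b\}$ before time $h$ elapses, so $x|_{I}\in X$ and hence $x=x^{*}$; alternatively, for two solutions $x,y$ one estimates $\|x(t)-y(t)\|\le L\int_{t_0}^{t}\|x(s)-y(s)\|\,ds$ and applies Grönwall's inequality to force $x\equiv y$. I expect this last passage — from a fixed point that is unique only within the chosen space $X$ to genuine uniqueness of solutions — to be the main obstacle; the iterated-contraction estimate disposes of existence and local uniqueness cleanly, while the trapping argument (or Grönwall's inequality) closes the remaining gap.
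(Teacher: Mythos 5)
Your argument is correct: it is the standard Picard--Lindel\"of proof via the Volterra integral reformulation, a closed tube $X$ inside $C(I,\mathbb{R}^n)$, the operator $(Tx)(t)=x_0+\int_{t_0}^{t}f(s,x(s))\,ds$, and the Banach fixed point theorem applied either to an iterate $T^{k}$ with the factorial estimate $\tfrac{(Lh)^k}{k!}$ or to $T$ itself under a Bielecki norm, followed by a trapping/Gr\"onwall step to promote uniqueness in $X$ to uniqueness among all solutions. Note, however, that the paper does not prove this statement at all: it is quoted in the preliminaries as classical background (only the integral-equation characterization of \eqref{ode}, which you also use as your starting point, is recorded there), so there is no in-paper argument to compare yours against. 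The places where such a proof can silently go wrong --- checking $T$ maps $X$ into $X$ via $h\le b/M$, avoiding an artificial shrinking of $h$ to force $Lh<1$, and not stopping at uniqueness of the fixed point within $X$ --- are exactly the points you address explicitly, so the proposal stands as a complete proof of the cited theorem; the only cosmetic caveat is the degenerate case $M=0$ (handle it by taking $h=\min\{a,b/(M+1)\}$ or noting $f\equiv 0$ on the cylinder makes the claim trivial).
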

%
\begin{theorem}[\cite{Burd}]
	Assume $\hat{a}(t,x(t),x'(t))\in C([0,1]\times \mathbb{R}\times \mathbb{R})$ and $\hat{a},\frac{\partial\hat{a}}{\partial x}, \frac{\partial\hat{a}}{\partial x'}\in C([0,1]\times \mathbb{R}\times \mathbb{R})$.
	If $\frac{\partial\hat{a}}{\partial x}>0$ and there exist $M>0$ such that $\left| \frac{\partial\hat{a}}{\partial x'} \right|<M, \forall (t,x,x') \in [0,1] \times \mathbb{R} \times \mathbb{R}$, 
	then the BVP 
	\begin{equation}\label{Burd}
		\frac{d^2 x}{d t^2}=\hat{a}(t,x,x')
	\end{equation}
	with $$x(0)=\alpha \ ,\ x(1)=\beta,$$  has a unique solution $x=x(t)$.
\end{theorem}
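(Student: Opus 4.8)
The plan is to carry out the shooting reduction that underlies the whole paper. For $s\in\RR$ let $x(\,\cdot\,;s)$ be the maximal solution of
\begin{equation}\label{shootIVP}
 \frac{d^{2}x}{dt^{2}}=\hat a(t,x,x'),\qquad x(0)=\alpha,\quad x'(0)=s .
\end{equation}
Applying the Picard--Lindel\"of theorem (Theorem~\ref{PicLind}) to the first-order system $u_{1}'=u_{2}$, $u_{2}'=\hat a(t,u_{1},u_{2})$ --- whose right-hand side is continuous and, since $\hat a,\hat a_{x},\hat a_{x'}$ are continuous, locally Lipschitz in $(u_{1},u_{2})$ --- this solution exists and is unique on a maximal interval $[0,\omega(s))$; here and below $\hat a_{x}:=\partial\hat a/\partial x$ and $\hat a_{x'}:=\partial\hat a/\partial x'$. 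Assuming for the moment that $\omega(s)>1$ for every slope we shall need, set $\phi(s):=x(1;s)-\beta$. A function $x$ solves the boundary value problem \eqref{Burd} if and only if $x=x(\,\cdot\,;s)$ for some root $s$ of $\phi$, so the statement reduces to showing that $\phi$ has exactly one zero.

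The heart of the proof is a monotonicity estimate coming from the first variational equation. By smooth dependence on initial data (valid because $\hat a$ is $C^{1}$ in $(x,x')$ with continuous partials), $s\mapsto x(\,\cdot\,;s)$ is $C^{1}$ and $v:=\partial x/\partial s$ solves
\begin{equation}\label{varEq}
 v''=\hat a_{x}(t,x,x')\,v+\hat a_{x'}(t,x,x')\,v',\qquad v(0)=0,\quad v'(0)=1 .
\end{equation}
Write $p(t):=\hat a_{x}(t,x(t;s),x'(t;s))>0$ and $q(t):=\hat a_{x'}(t,x(t;s),x'(t;s))$, so that $|q|\le M$; both are continuous, hence $v\in C^{2}$. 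I claim $v(t)>0$ for every $t\in(0,1]$. Since $v(0)=0$ and $v'(0)=1$, $v>0$ just to the right of $0$; if $v$ vanished again on $(0,1]$, let $t^{\star}$ be its first zero there, so $v>0$ on $(0,t^{\star})$. Then $v$ attains a positive maximum at some interior point $\bar t\in(0,t^{\star})$, where $v'(\bar t)=0$ and $v''(\bar t)\le 0$; but \eqref{varEq} forces $v''(\bar t)=p(\bar t)\,v(\bar t)>0$, a contradiction. Hence $\phi'(s)=v(1;s)>0$ wherever $\phi$ is defined, so $\phi$ is strictly increasing. This maximum-principle step is the point I expect to be decisive; the surrounding bookkeeping is routine.

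Next I would make the monotonicity quantitative, upgrading strict monotonicity of $\phi$ to the existence of a single zero. Multiplying \eqref{varEq} by $\mu(t):=\exp\!\big(-\int_{0}^{t}q(\tau)\,d\tau\big)$ gives $(\mu v')'=\mu\,p\,v\ge 0$ on $[0,1]$, while $\mu(0)v'(0)=1$; therefore $v'(t)\ge\mu(t)^{-1}=\exp\!\big(\int_{0}^{t}q\big)\ge e^{-Mt}\ge e^{-M}$, and integrating, $v(1;s)\ge\int_{0}^{1}e^{-Mt}\,dt=(1-e^{-M})/M=:c>0$. Thus $\phi$ is $C^{1}$ with $\phi'(s)=v(1;s)\ge c$ for every admissible $s$; being strictly increasing with derivative bounded below by a positive constant, $\phi$ sweeps out all of $\RR$, so there is a unique $s^{\star}$ with $\phi(s^{\star})=0$, and $x(\,\cdot\,;s^{\star})$ is the unique solution of \eqref{Burd}. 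Uniqueness alone is immediate: two solutions of \eqref{Burd} with different initial slopes would produce two zeros of the strictly increasing $\phi$, while with equal initial slopes they coincide by uniqueness for \eqref{shootIVP}.

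The genuinely delicate ingredient --- the one I expect to cost the most effort --- is the a priori control needed to guarantee that $\omega(s)>1$ for the slopes in play and that $\phi$ really does exhaust $\RR$, since superlinear growth of $\hat a$ in $x$ can in principle make \eqref{shootIVP} blow up before $t=1$. Here I would invoke the maximum principle afforded by $\hat a_{x}>0$ a second time, now on \eqref{Burd} itself: an interior extremum of any candidate solution $x$ forces a sign condition on $\hat a(t,x,0)$, which together with $|\hat a_{x'}|\le M$ yields bounds on $\|x\|$ and $\|x'\|$ depending only on $\alpha,\beta,M$ and $\hat a$. Combined with the continuation principle and the ordering coming from \eqref{varEq} --- a comparison argument like the one above shows that $\{s:\omega(s)>1\}$ is an interval --- this confines the relevant slopes to a compact set on which every shooting trajectory reaches $t=1$ and on which the computations of the previous two paragraphs apply verbatim. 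Making these a priori estimates airtight, in the spirit of the classical two-point boundary value theory from which this statement is quoted, is where the real work lies; the identities \eqref{varEq} and $(\mu v')'=\mu\,p\,v$ are then the engine that closes the argument.
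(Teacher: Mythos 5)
Note first that the paper itself gives no proof of this theorem: it is quoted from \cite{Burd} (where it is in turn attributed to Keller's classical shooting argument), so your attempt can only be measured against that classical proof. Your first two paragraphs reproduce its core correctly: reducing \eqref{Burd} to the initial value problem with unknown slope $s$, differentiating with respect to $s$ to get the variational equation $v''=\hat a_x v+\hat a_{x'}v'$ with $v(0)=0$, $v'(0)=1$, and using $\hat a_x>0$, $|\hat a_{x'}|\le M$ together with the integrating factor $\mu=\exp(-\int_0^t \hat a_{x'})$ to get $v(1)\ge (1-e^{-M})/M>0$. That gives strict monotonicity of the shooting map $\phi(s)=x(1;s)-\beta$ and hence uniqueness, and it is exactly the engine of the standard proof.

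The genuine gap is in your third paragraph, and you have located it yourself but not closed it. Everything in paragraphs one and two presupposes that the shooting trajectories exist up to $t=1$, and the conclusion ``$\phi'\ge c>0$, hence $\phi$ sweeps out all of $\mathbb{R}$'' is only valid if the domain of $\phi$ is all of $\mathbb{R}$ (or an interval at whose endpoints $\phi$ blows up). Under the stated hypotheses this is not automatic: $\hat a_x>0$ carries no upper bound and no uniform Lipschitz condition in $x$, so the initial value problems can blow up before $t=1$ (e.g.\ $\hat a=x^3+x$), and the domain of $\phi$ may be a proper open interval. Your proposed fix --- a maximum principle on \eqref{Burd} giving bounds on $\|x\|$ and $\|x'\|$ ``depending only on $\alpha,\beta,M$ and $\hat a$'' --- does not follow from the hypotheses: at an interior maximum one only gets $\hat a(t_0,x(t_0),0)\le 0$, and since $\hat a_x>0$ gives no positive lower bound and no sign information on $\hat a(t,x,0)$ for large $x$, this yields no bound on $x(t_0)$; moreover, even a priori bounds on solutions of the boundary value problem would not by themselves show that $\phi$ actually attains the value $0$, which is what existence requires. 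The classical treatments close this gap either by assuming in addition a uniform Lipschitz (linear growth) condition guaranteeing that every initial value problem extends to $[0,1]$ --- this is how Keller states the theorem, and \cite{Burd} suppresses that hypothesis --- or by a careful analysis showing $x(1;s)\to\pm\infty$ as $s$ approaches the endpoints of its maximal interval of admissible slopes. As written, your argument proves uniqueness and proves existence only under the unstated additional assumption of global solvability of the shooting initial value problems; the existence half in full generality is precisely the part still missing.
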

To better understand the theorem we illustrate it by giving an example on the interval $[1, 2]$ instead of $[0, 1]$: Consider the BVP, $$ x''(t)+\sin x'+e^{-tx}=0$$with $x(1)=x(2)=0$ and $t\in[1,2]$. Now apply the theorem to $$ x''(t)=-\sin x'-e^{-tx}=\hat{a}(t,x,x').$$ Since $q(t,x(t))= \frac{\partial \hat{a}}{\partial x}=te^{-tx}>0$, $\forall t>0,$ and $\left| p(t)=\frac{\partial \hat{a}}{\partial x'} \right|=\left|-\cos x'\right| \leq 1=M$, then the condition is satisfied and the BVP has a unique solution.
Now reader might ask how can we apply this Theorem to Lane-Emden equation. Theorem 2.2 can be simplified by taking into account that the functions $\frac{\sin x'}{x'}$ and $e^{-tx}$ are continuous on the interval $(0,\infty)$ to assure the differential equation is linear.
%

\section{\textbf{Computational Mehtods For Dynamical systems}}
In this section, we start by presenting the methods (shooting to transform from BVP to IVP, and Euler's for regular singularity in the drift term) and apply them on the second order singular dynamical system.
\subsection{Shooting method}
The shooting method treats the two-point BVP as an IVP. The idea basically, is to write the BVP in a vector form and begin the solution at one end of the BVP, and then "shooting" to the other end with any IVP solver, such as; Runge-Kutta method or multistep method for linear case and Secant method or Newton's method for nonlinear case, until the boundary condition at the other end converges to its correct value. To be precise, the ordinary differential equation of second order, associated with its initial conditions must normally be written as a system of first order equation before it can be solved by standard numerical methods. Next figure shows graphically the mechanism of the shooting. 
\begin{center}
	\includegraphics[width=.5\textwidth]{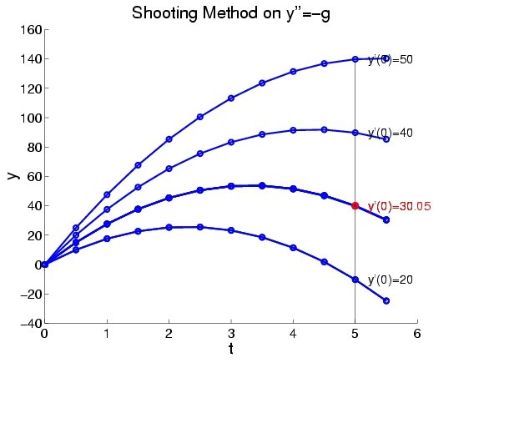}
\end{center}
Roughly speaking, we 'shoot' out trajectories in different directions until we find a trajectory that has the desired boundary value. 
The drawback of the method is that it is not as robust as those used to solve BVPs such as finite difference or collocation methods presented in \cite{Jedara,Russ}, and there is no guarantee of convergence.
Shooting method can be used widely for solving a BVP by reducing it to an associated IVP, and is valid for both linear (also called chasing method) and non linear BVPs, by \cite{Othmar}, 
\begin{equation}\label{Othmar}
	\frac{d^2 x}{d t^2}=\hat{a}(t,x(t),x'(t)),\  x(t_0)=x_0,\ x(t_1)=x_1.
\end{equation} 
Next theorem provides existence and uniqueness to the BVP's solution. 

\begin{theorem}
	Define a set $D:=\{(t,x,x')\in [a,b]\times\RR\times\RR\}$, and assume $f$ is continuous function on $D$ such that it satisfies the BVP: 
	\begin{equation}
		\label{bvp}
		\left\{\begin{array}{l}
			x''(t)=f(t,x,x')\\ x(a)=\alpha \\ x'(b)=\beta.
		\end{array} \right.
	\end{equation}
	Suppose that $f_x$ and $f_{x'}$ are continuous on the same set $D$. If \\
	{\bf (i)} $f_x(t,x,x')>0$ for all values, and \\
	{\bf (ii)} There exists $M>0$ such that $$|f_{x'}(t,x,x')|\leq M , \quad\forall (t,x,x')\in D$$
	then the BVP \eqref{bvp} has a unique solution.
\end{theorem}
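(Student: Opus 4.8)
The plan is to carry out the classical shooting argument: replace the mixed boundary condition by an initial condition with a free slope parameter, and show that the resulting ``miss function'' is a strictly increasing bijection of $\RR$. Concretely, for $s\in\RR$ I would consider the initial value problem
\begin{equation*}
x'' = f(t,x,x'), \qquad x(a) = \alpha, \qquad x'(a) = s,
\end{equation*}
and denote its solution by $x(t;s)$. Since $f_x$ and $f_{x'}$ are continuous on $D$, $f$ is locally Lipschitz in $(x,x')$, so Theorem~\ref{PicLind}, applied to the equivalent first-order system, gives a unique local solution; the a priori bound on $f_{x'}$ together with $f_x>0$ is then used to continue the solution across all of $[a,b]$ for every $s$. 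Define $\phi(s) := x'(b;s) - \beta$. A solution of the BVP \eqref{bvp} exists and is unique precisely when $\phi$ has exactly one zero, so it suffices to prove that $\phi$ is continuous, strictly increasing, and unbounded above and below.

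Next I would differentiate the IVP with respect to the parameter. Setting $z(t) := \partial x(t;s)/\partial s$, the standard smooth-dependence theory (legitimate because $f_x,f_{x'}$ are continuous) shows that $z$ solves the linear variational equation
\begin{equation*}
z'' = f_x\big(t,x,x'\big)\,z + f_{x'}\big(t,x,x'\big)\,z', \qquad z(a) = 0, \qquad z'(a) = 1,
\end{equation*}
and that $\phi'(s) = z'(b;s)$. The core of the proof is to show $z'(t)>0$ on all of $[a,b]$. Since $z(a)=0$ and $z'(a)=1>0$, both $z$ and $z'$ are positive just to the right of $a$; if $z'$ vanished somewhere, let $t^\ast$ be the first such point. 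On $[a,t^\ast)$ one has $z'>0$, hence $z(t^\ast) = \int_a^{t^\ast} z' > 0$, so the variational equation forces $z''(t^\ast) = f_x(t^\ast,\cdot)\,z(t^\ast) > 0$. But $z'$ decreasing down to a zero at $t^\ast$ requires $z''(t^\ast)\le 0$, a contradiction; therefore $z'>0$ throughout $[a,b]$ and $z>0$ on $(a,b]$.

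To upgrade monotonicity to surjectivity I would make the bound on $z'$ quantitative: from $z\ge 0$ and $f_x>0$ we get $z'' \ge f_{x'} z' \ge -M z'$, so $(e^{Mt}z')' \ge 0$, whence $z'(t) \ge e^{M(a-t)} \ge e^{-M(b-a)} =: c > 0$ uniformly in $s$ and $t$. Thus $\phi'(s)\ge c$ for all $s$, so $\phi$ is strictly increasing with $\phi(s)\to\pm\infty$ as $s\to\pm\infty$; by continuity and the intermediate value theorem $\phi$ has a unique zero, giving the unique solution of \eqref{bvp}. The step I expect to cost the most care is the global-existence claim from the first paragraph — guaranteeing that the shooting trajectories do not escape to infinity before $t=b$ — because the hypotheses bound only the partial derivatives of $f$ on the slab; one resolves this either by exploiting the a priori estimates implicit in $f_x>0$ and $|f_{x'}|\le M$ on the compact region, or by adding the usually tacit assumption that $f$ grows at most linearly in $(x,x')$. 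The positivity argument for $z'$ is the conceptual heart but is short once the variational equation is in hand.
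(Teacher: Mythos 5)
Your overall route is the classical shooting argument, and for the record the paper itself gives no proof of this theorem at all: it is stated as the known existence--uniqueness result (essentially Theorem 11.1 of Burden--Faires, cited as \cite{Burd} for Theorem 2.2, going back to Keller), and only the linear corollary is proved afterwards. So there is nothing in the paper to mirror; your proposal has to stand on its own. The core of what you wrote does stand: the variational equation $z''=f_x z+f_{x'}z'$ with $z(a)=0$, $z'(a)=1$, the first-zero contradiction showing $z'>0$ on $[a,b]$, the Gronwall-type bound $z'(t)\ge e^{-M(b-a)}$, and the conclusion that the miss function $\phi(s)=x'(b;s)-\beta$ is strictly increasing are all correct, and they do correctly address the mixed condition $x'(b)=\beta$ as written. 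This already yields uniqueness (any two solutions would give two zeros of a strictly monotone $\phi$).

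The genuine gap is exactly the step you flagged and then waved away: global existence of $x(\cdot;s)$ on $[a,b]$ for every $s\in\RR$, which your surjectivity argument ($\phi'\ge c>0$ on all of $\RR$, hence $\phi(s)\to\pm\infty$) silently requires. The hypotheses $f_x>0$ and $|f_{x'}|\le M$ do \emph{not} prevent finite-time blow-up of the shooting trajectories: $f(t,x,x')=x^3+x$ satisfies both, yet solutions of the IVP with large $|s|$ escape to infinity before $t=b$. Your first proposed fix ("a priori estimates on the compact region") has no content, because $D=[a,b]\times\RR\times\RR$ is not compact in $(x,x')$ and no a priori bound on $x$ or $x'$ along shooting trajectories follows from the stated hypotheses; your second fix (assume linear growth of $f$) proves a different theorem -- it is essentially Keller's uniform Lipschitz hypothesis, under which the argument closes but which is not assumed here. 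To complete the proof under the stated hypotheses you must work on the (open) maximal set of slopes $s$ for which $x(\cdot;s)$ reaches $b$, and show that $\phi$ still attains the value $0$ there: e.g.\ show that if $s$ tends to a finite endpoint of that set then $x'(b;s)\to\pm\infty$ (otherwise the uniform bounds on $x,x'$ plus continuous dependence would let you continue past that endpoint), so $\phi$ maps its domain onto $\RR$ and the intermediate value theorem applies. Without that continuation/compactness argument, the existence half of your proof is incomplete.
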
  
A special case of this theorem is the following corollary, \emph{i.e.}, when the right hand side of \eqref{bvp} is linear. For linear Lane-Emden equations, one can use Frobenius method to determine the analytical solutions of \eqref{lane-emden1} near the singularity, see, for instance, \cite{Ramos}.
\begin{corollary} 
	Consider \eqref{bvp} given by  
	\begin{equation}\label{Ramos}
		x''(t)=p(t)x'+q(t)x+r(t),
	\end{equation}and the time-dependent coefficients $\ p(t) \ , \ q(t) \ , \ r(t) \ $ are continuous functions on the domain $[a,b]$ and further $q(t)>0$, then the BVP \eqref{bvp} has a unique solution.  
\end{corollary}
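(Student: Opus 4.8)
The plan is to recognize this corollary as the linear instance of the preceding Theorem~3.1 and merely check its hypotheses. Writing the right-hand side of \eqref{Ramos} as $f(t,x,x') = p(t)x' + q(t)x + r(t)$, continuity of $p,q,r$ on $[a,b]$ makes $f$ continuous on $D = [a,b]\times\RR\times\RR$, and the partial derivatives $f_x(t,x,x') = q(t)$ and $f_{x'}(t,x,x') = p(t)$ exist and are continuous on $D$ as well.

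It then remains to verify the two structural conditions of Theorem~3.1. Condition (i) is immediate: $f_x = q(t) > 0$ on $D$ by assumption. For condition (ii), $p$ is continuous on the compact interval $[a,b]$, so the extreme value theorem furnishes $M := \max_{t\in[a,b]}|p(t)| < \infty$, whence $|f_{x'}(t,x,x')| = |p(t)| \le M$ for all $(t,x,x')\in D$. Both hypotheses being in force, Theorem~3.1 applies and delivers a unique solution of \eqref{bvp}. In this route the only ingredient that is not purely formal is the compactness argument producing the bound $M$.

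A self-contained alternative, closer to the shooting philosophy of this section, is to proceed by linear superposition. Solve the IVP $x'' = p x' + q x + r$ with $x_1(a) = \alpha$, $x_1'(a) = 0$, and the homogeneous IVP $x'' = p x' + q x$ with $x_2(a) = 0$, $x_2'(a) = 1$; both have unique solutions on $[a,b]$ by Picard--Lindelof (Theorem~\ref{PicLind}), since the coefficients are continuous. Any $x = x_1 + c\,x_2$ solves \eqref{Ramos} and satisfies $x(a) = \alpha$, so one must pick $c$ with $x_1'(b) + c\,x_2'(b) = \beta$, which is possible and unique exactly when $x_2'(b) \neq 0$. Establishing $x_2'(b) \neq 0$ is the one genuine obstacle, and it is precisely where $q > 0$ is used, via a maximum-principle (disconjugacy) argument: at an interior point where $x_2$ attains a positive local maximum one would have $x_2'' \le 0$ while $x_2'' = q x_2 > 0$, a contradiction; together with $x_2(a) = 0$ and $x_2'(a) = 1 > 0$ this forces $x_2 > 0$ on $(a,b]$ with its maximum at $b$, hence $x_2'(b) > 0$. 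Uniqueness for the BVP follows in the same way: any solution of the homogeneous problem with $x(a) = 0$ is a scalar multiple of $x_2$, and $x_2'(b) \neq 0$ forces that scalar to vanish.
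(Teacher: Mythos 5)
Your proposal is correct, and the paper's own proof is essentially your second (superposition) route, only in much sketchier form: it solves the same two initial value problems ($x_1$ for the full equation with $x_1(a)=\alpha$, $x_1'(a)=0$, and $x_2$ for the homogeneous equation with $x_2(a)=0$, $x_2'(a)=1$), writes down $\hat{x}=x_1+\frac{\alpha-x_1(b)}{x_2(b)}\,x_2$, and appeals to Picard--Lindel\"of, but it never verifies that the denominator is nonzero --- which is exactly where the hypothesis $q(t)>0$ must enter (the paper in fact never uses $q>0$ at all), and its matching coefficient targets a condition at $x(b)$ even though \eqref{bvp} prescribes $x'(b)=\beta$, as your version correctly does. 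Your disconjugacy/maximum-principle argument supplies precisely this missing nondegeneracy; the one rough spot is the step ``maximum at $b$, hence $x_2'(b)>0$'', which by itself only gives $x_2'(b)\ge 0$ --- strictness follows by noting that $x_2'(b)=0$ with $x_2(b)>0$ and $q(b)>0$ forces $x_2''(b)>0$, so $x_2$ would exceed $x_2(b)$ just to the left of $b$, recreating the forbidden interior maximum. Your first route --- verifying the hypotheses of Theorem 3.1 directly, with $f_x=q>0$ and $|f_{x'}|=|p|\le M$ by continuity of $p$ on the compact interval $[a,b]$ --- is genuinely different from what the paper does, is shorter, and actually uses both stated hypotheses; the superposition route, by contrast, buys an explicit construction of the solution in the spirit of the linear shooting method that motivates this section.
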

\begin{proof}
	We need to consider two cases: {\bf (i)} When equation \eqref{Ramos} given with boundary conditions $x(a)=\alpha \ , \ x'(a)=0,$ has a unique solution $x_1(t).$ \ {\bf (ii)} When equation \eqref{Ramos} with $r(t)=0, \ x(a)=0 \ , \ x'(a)=1,$ has a unique solution $x_2(t).$ 
	Therefore, one can easily check that the linear combination $\hat{x}(t)=x_1(t)+\displaystyle\frac{\alpha -x_1(b)}{x_2(b)}x_2(t)$ is the unique solution to \eqref{Ramos}, and hence to \eqref{bvp} due to the existence and uniqueness guaranteed by Picard-Lindelof theorem \eqref{PicLind}.
\end{proof}

\subsection{Euler's Method}
Euler's method is a numerical approach for solving (iteratively) initial value problems, as follows: We divide the time interval $[t_0,T]$ into $N$ equal subintervals, each of length $h=\Delta t=t_{n+1}-t_n$, for $n\geq 0$, \ and start by initial value $x(0)$ then move forward using the step size towards $x(T)$, that is, given the second-order ordinary differential equation \eqref{Othmar}, converting it into two first-order dynamic equations (i.e., dynamical system). Discretize the interval $[t_0,T]$ into subintervals, and by assuming $y_n$ the approximation to $x(t_n)$ and $v_n$ the approximation to $u(t_n).$ Euler's method is then can be expanded, as a two-terms truncated Taylor series, by the following Euler's method for solving a second-order differential equation is given by:
\begin{center}\fbox{\begin{minipage}[b]{\textwidth}
			\noindent{\bf Forward Euler's Algorithm.} 
			\medskip
			
			\noindent{\bf Step 0. (Initialization):}
			Take
			\begin{equation*}
				t_0,\ x_0\in\RR,\quad\mbox{and \ step size}\quad h=\frac{T-t_0}{N}, \ n\geq 0.
			\end{equation*}
			\smallskip
			\noindent {\bf Step 1. (Forward step):} Given $t_n\ , \ y_n\ , \ v_n$ define
			\begin{align*}
				t_{n+1} &= t_n + h, \\
				y_{n+1} &= y_n + h \cdot v_n, \\
				v_{n+1} &= v_n + h \cdot\hat{a}(t_n, y_n, v_n),
			\end{align*}
			
			\noindent {\bf Stopping Criterion:} If $v_{n+1}=v_n$ then stop.
\end{minipage}}\end{center}
\vspace{.4cm}

The local error at every step is proportional to the square of the step size $h$ and the global error at a given time is proportional to $h$. Moreover, the order of the global error can be calculated from the order of the local error ( i.e. by summing up the local error). We can understand Euler's method by appealing the idea that some differential equations provide us with the slope at all points of the function , while an initial value provides a point on the function. Using this information we can approximate the function with a tangent line at the initial point. It is known that the tangent line is only a good approximation over a small interval. When moving to a new point, we can construct an approximate tangent line, using the actual slope of the function, and an approximation to the value of the function at the tangency point. Repeating this manner, we eventually construct a piecewise-linear approximation to the solution of the differential equation. Moreover, this approximation can be seen as a discrete function and to make it a continuous function, we interpolate (linearly) between each pair of these points.

In the following, we study and analyse the Lane-Emden-type equation with an endpoint singularity in terms of the independent variable which has the form  
\begin{equation}\frac{d^2 x}{d t^2}=\frac{-a(t,x)}{1-t}\frac{d x}{d t}+g(t,x) = \hat{a}(t,x,x')
	\label{Lane}
\end{equation}
where $\hat{a}(t,x(t),x'(t)): [0,1)\times\RR\times\RR\to\RR$, and the Lipschitz functions $a(t,x), g(t,x)\in C^1([0,1)\times\RR)$, for all $0 \leq t<1$. 
At $t=1$, the $\displaystyle\frac{-a(t,x)}{1-t}$ term is singular, but symmetry implies the boundary condition $x'(0)=0.$ With this boundary condition, the term $\displaystyle\frac{-a(t,x)}{1-t}\frac{dx}{dt}$ is well defined as $t\to 1.$
The solution of \eqref{Lane} can be given by the system:
\begin{equation}\label{dx'dx} 
	\begin{aligned}
		dx'&=\hat{a}(t,x(t),x'(t))dt \\ dx&=x'dt 
	\end{aligned}
\end{equation}
Define $ x_t:=x(t), x'_t:=x'(t)$. By the fundamental theorem of calculus and provided that all integrals are exist (finite), we notice that equation \eqref{dx'dx} is equivalent to the nonlinear system of integral equations:
\begin{equation}\label{integralnotation}
	\begin{aligned} 
		x'_t &= x'_{t_n} +\int_{t_n}^t \hat{a}(s,x_s,x'_s)\,ds \\ 
		x_t &= x_{t_n} +\int_{t_n}^t x'_s\,ds .
	\end{aligned}
\end{equation}
Where 
$$ 0 = t_0 < t_1 < t_2 < ... < 1 .$$
%
Expanding the integrands in \eqref{integralnotation} so we have:
\begin{align*}
	x'_t &= x'_{t_n}+\int_{t_n}^t \Big[\hat{a}(t_n, x_{t_n}, x'_{t_n})
	+\int_{t_n}^s\Big[\frac{\partial\hat{a}}{\partial t}(u,x_u,x'_u)\\
	& \quad+\frac{\partial\hat{a}}{\partial x}(u,x_u,x'_u)x'_u+\frac{\partial \hat{a}}{\partial x'}(u,x_u,x'_u)\hat{a}(u,x_u,x'_u)\Big]du\Big]ds \\ 
	x_t &= x_{t_n}+\int_{t_n}^t\big[x'_{t_n}+\int_{t_n}^s\hat{a}(u,x_u,x'_u)du\big]ds.
\end{align*}
Or in the equivalent form,
\begin{align*}
	x'_t &= x'_{t_n}+\hat{a}(t_n,x(t_n),x'(t_n))(t-t_n)\\
	& \qquad+\int_{t_n}^t \!\int_{t_n}^s (\frac{\partial\hat{a}}{\partial t}+\frac{\partial\hat{a}}{\partial x} x'_u+\frac{\partial\hat{a}}{\partial x'}\hat{a})(u,x_u,x'_u)\,du\,ds\\ 
	x_t &= x_{t_n} + x'_{t_n}(t-t_n) + \int_{t_n}^t \!\int_{t_n}^s \hat{a}(u,x_u,x'_u)\,du\,ds 
\end{align*}
For simplicity we assume
\begin{align*}
	L^{(1)}_n &=\int_{t_n}^t\!\int_{t_n}^s \big(\frac{\partial\hat{a}}{\partial t}+\frac{\partial\hat{a}}{\partial x}x'_u+\frac{\partial\hat{a}}{\partial x'}\hat{a})(u,x_u,x'_u)\,du\,ds,  
	\\
	L^{(2)}_n &=\int_{t_n}^t\!\int_{t_n}^s \hat{a}(u,x_u,x'_u)\,du\,ds.
\end{align*}
%
%
%
\newline
Thus the system becomes,
\begin{equation}
	\begin{aligned}
		x'_{t_{n+1}}&=x'_{t_n}+\hat{a}(t_n, x(t_n), x'(t_n))(h_{n+1})+ L_n^{(1)} \\ x_{t_{n+1}}&=x_{t_n}+x'_{t_n} h_{n+1}+L_n^{(2)}  
	\end{aligned}
\end{equation} where $h_{n+1}=t_{n+1}-t_n.$
\newline 

In order to estimate the error, we need to find a bound for the integrands in $L^{(1)}_n$ and $L^{(2)}_n$. The double integrals in both $L^{(1)} , L^{(2)}$ yield the local truncation error, if we define the numerical value by:
\begin{equation}
	\begin{aligned}\label{numerical value y}
		y'_{n+1}&=y'_n + \hat{a}(t_n,y_n,y'_n)h_{n+1} \\ 
		y_{n+1}&=y_n + y'_n h_{n+1}.
	\end{aligned}
\end{equation}
where $h_{n+1}=t_{n+1}-t_n.$


\section{\textbf{Discretization and Convergence Analysis}} 

Consider a sequences of times  
$ 0 = t_0 < t_1 < t_2 < ... < 1 ,$
and the corresponding step sizes $h_n=t_n - t_{n-1}.$
Define $x_n = x(t_n)$ and $x'_n=x'(t_n)$ where $(x(t), x'(t))$ is a solution of (5). Writing (8) in the form: 
\begin{equation}
	\begin{aligned}
		x'_{n+1} &=x'_n +\hat{a}(t_n, x_n, x'_n)(h_{n+1})+ L_n^{(1)} \\ 
		x_{n+1}  &=x_n +x'_{n} h_{n+1}+L_n^{(2)}  
	\end{aligned}
\end{equation}
Use $y_n$ as defined in (9) and let $\epsilon_i=x_i-y_i \,,\epsilon'_i=x'_i(t)-y'_i(t), \forall i.$ So we have \\ 
\begin{align*} 
	\epsilon'_{n+1}&=\epsilon'_n +\big[\hat{a}(t_n,x_n, x'_n)-\hat{a}(t_n,y_n,y'_n)\big](h_{n+1})+ L_n^{(1)} \\ 
	\epsilon_{n+1}&=\epsilon_n +\epsilon'_n h_{n+1}+L_n^{(2)}
\end{align*}
\\ 
By using the inequality $(x+y)^2\leq2x^2+2y^2$, the error can be estimated as, 
\begin{equation}\label{epsiloninequality}
	\begin{split}
		(\epsilon'_{n+1})^2\leq(\epsilon'_n)^2+2\big[\hat{a}(t_n,x_n,x'_n)-\hat{a}(t_n,y_n,y'_n)\big]^2(h_{n+1})^2 +2(L_n^{(1)})^2 \\ +2\epsilon'_n\Big(\hat{a}(t_n,x_n,x'_n)-\hat{a}(t_n,y_n,y'_n)\Big)h_{n+1}+2\epsilon'_nL_n^{(1)} \\
		(\epsilon_{n+1})^2\leq(\epsilon_n)^2+2(\epsilon'_n)^2(h_{n+1})^2+2(L_n^{(2)})^2+2\epsilon_n\epsilon'_n h_{n+1} +2\epsilon_n L_n^{(2)}.
	\end{split}
\end{equation}

Next, we introduce some assumptions on the functions $a(t,x(t)), g(t,x(t))$ and their partial derivatives for $t \in [0,1), x \in \mathbb{R}$ . But before that we remind ourselves of the value of $\hat{a}$ from section 3, 
$$ \hat{a}(t,x(t),x'(t))=\frac{-a(t,x(t))}{1-t}\frac{dx}{dt}+g(t,x(t)). $$
Also, for any $ T_1 , T_2 \in [0,1)$ the Lipschitz conditions are:
$$|a(t,x)-a(t,y)| \leq T_1 |x-y| \,,\, |g(t,x)-g(t,y)| \leq T_2 |x-y|.$$
Our required bounds explicitly are:
$$|a(t,x(t))| \leq C_0 \,,\, |g(t,x(t))| \leq C_3 .$$ 
The partial derivatives bounds are:
\begin{subequations}
	\begin{align}\Big|\frac{\partial a}{\partial t} (t,x(t))\Big| & =|a_1(t,x(t))| \leq C_1,\\ 
		\Big|\frac{\partial a}{\partial x}(t,x(t))\Big| & =|a_2(t,x(t))| \leq C_2,\\ 
		\Big|\frac{\partial g}{\partial t}(t,x(t))\Big| & =|g_1(t,x(t))| \leq C_4,\\
		\Big|\frac{\partial g}{\partial x}(t,x(t))\Big| & =|g_2(t,x(t))| \leq C_5 .
	\end{align}
\end{subequations}

This final bound applies along the path
$$|x'(t)|\leq A_1.$$ 
Taking the difference between the computed and the exact values of $\hat{a},$ 
\begin{equation}\label{differencea^}
	\begin{aligned}
		\big|\hat{a}(t,x,x')-\hat{a}(t,y,y')\big|
		&= \Big|\frac{-a(t,x)}{1-t}x'+g(t,x)+\frac{a(t,y)}{1-t}y'-g(t,y)\Big|\\ 
		&\leq \bigg|\frac{a(t,y)y'-a(t,x)x'}{1-t}\bigg|+\Big|g(t,x)-g(t,y)\Big|.
	\end{aligned}
\end{equation}
\\
By adding and subtracting the required terms, we have
\begin{align*}
	\big|a(t,y)y' - a(t,x)x'\big|
	&= \big|a(t,x)(y'-x')+x'(a(t,y)-a(t,x))+(a(t,y)-a(t,x))(y'-x')\big| \\
	&\leq C_0\,|y'-x'|+A_1T_1\,|y-x|+T_1\,|y-x|.|y'-x'|. 
\end{align*}
Thus, the difference \ref{differencea^} becomes, 
$$|\hat{a}(t_n,x_n,x'_n)-\hat{a}(t_n,y_n,y'_n)| \leq \frac{C_0|\epsilon'_n|}{1-t}+\frac{A_1T_1|\epsilon_n|}{1-t}+\frac{T_1|\epsilon_n|\,|\epsilon'_n|}{1-t}+T_2|\epsilon_n|.$$
Note that,
\begin{align*}
	\frac{\partial\hat{a}}{\partial t}
	&= \hat{a_1}(t,x,x')\\ 
	&=\frac{-a_1(t,x)x'}{1-t}-\frac{a(t,x)x'}{(1-t)^2}+g_1(t,x),\\
	\frac{\partial\hat{a}}{\partial x}x'
	&=\frac{-a_2(t,x)}{1-t}(x')^2+g_2(t,x)x',\\ 
	\frac{\partial \hat{a}}{\partial x'}\hat{a}
	&=\frac{a^2(t,x)}{(1-t)^2}x'-\frac{a(t,x)g(t,x)}{1-t}.
\end{align*}
We now apply a very well known result from functional analysis, Cauchy-Schwarz inequality twice on $L^{(1)} \text{and}  L^{(2)}:$
\begin{align*}
	\big(L_N^{(1)}\big)^2
	&= \bigg(\int_{t_n}^{t_{n+1}}\!\int_{t_n}^{t}(\frac{\partial\hat{a}}{\partial t}+\frac{\partial\hat{a}}{\partial x}x'+\frac{\partial\hat{a}}{\partial x'}\hat{a})\,ds\,dt\bigg)^2 \\
	&\leq h^2_{n+1}\int_{t_n}^{t_{n+1}}\!\int_{t_n}^{t}\Big(\frac{\partial\hat{a}}{\partial t}+\frac{\partial\hat{a}}{\partial x}x'+\frac{\partial\hat{a}}{\partial x'}\hat{a}\Big)^2\,ds\,dt \\
	&\leq h^2_{n+1}\int_{t_n}^{t_{n+1}}\!\int_{t_n}^{t}\Bigg[3\Big(\frac{\partial\hat{a}}{\partial t}\Big)^2+3\Big(\frac{\partial\hat{a}}{\partial x}x'\Big)^2+3\Big(\frac{\partial\hat{a}}{\partial x'}\hat{a}\Big)^2\Bigg]\,ds\,dt \\
	& \leq 3h^2_{n+1} \int_{t_n}^{t_{n+1}}\!\int_{t_n}^{t} \bigg( \frac{3C_1^2A_1^2}{(1-t)^2}+\frac{3C_0^2A_1^2}{(1-t)^4}+3C_4^2 \\
	&+\frac{2C_2^2A_1^4}{(1-t)^2} + 2C_5^2A_1^2 + 2\frac{C_0^4A_1^2}{(1-t)^4} + 2\frac{C_0^2C_3^2}{(1-t)^2}\bigg)\,ds\,dt \\ 
	&\leq D_1\frac{h_{n+1}^4}{(1-t_{n+1})^4}.
\end{align*} for some Constant $D_1,$ which does not depend on $h_{n+1}$ and $n$. 
\begin{align*}
	\big(L_N^{(2)}\big)^2
	&= \bigg(\int_{t_n}^{t_{n+1}}\!\int_{t_n}^{t}(\frac{-a(t,x)}{1-s}+g(t,x))\,ds\,dt\bigg)^2 \\ &\leq h_{n+1}^2 \int_{t_n}^{t_{n+1}}\!\int_{t_n}^{t}\Big(\frac{-a(t,x)}{1-s}+g(t,x)\Big)^2\,ds\,dt \\  &\leq h_{n+1}^2 \int_{t_n}^{t_{n+1}}\!\int_{t_n}^{t}2\frac{a^2(t,x)}{(1-s)^2}\,ds\,dt\,+h_{n+1}^2\int_{t_n}^{t_{n+1}} \!\int_{t_n}^{t}2g^2(t,x)\,ds\,dt \\ &\leq 2h_{n+1}^2\Big( \int_{t_n}^{t_{n+1}}\!\int_{t_n}^{t}\frac{C_0^2}{(1-s)^2}\,ds\,dt\,+\int_{t_n}^{t_{n+1}}\! \int_{t_n}^{t}C_3^2\,ds\,dt\Big) \\ &= 2h_{n+1}^2\big(C_0^2 \int_{t_n}^{t_{n+1}} \frac{-1}{1-s}\,dt+C_3^2\int_{t_n}^{t_{n+1}}(t-t_n)\,dt\big) \\
	&\leq \frac{2h_{n+1}^4C_0^2}{(1-{t_{n+1}})^2}+\frac{C_3^2}{2}h_{n+1}^4 \\ &\leq D_2\frac{h_{n+1}^4}{(1-t_{n+1})^2}.
\end{align*}
where $D_2$ is independent of $n$ and $h_{n+1}$. 

To avoid the singularity and produce a better estimation to test the efficiency of the algorithm, we introduce a variable step size by fixing 
$\hat{h}>0$ 
and then defining step size $h_n$ and node points $t_n$ using $\hat{h}$:
$$\hat{h}=\frac{h_{n+1}}{1-t_{n+1}}$$ 
or,
\begin{equation}\label{stepsize}
	t_{n+1}=t_n+\hat{h}(1-t_{n+1}).
\end{equation}

In the process of estimating the global error, we need to use the following two fundamental lemmas:
\begin{lemma}\label{binomialexponential}
	For all $x\geq -1$, and any $m>0$, we have $0\leq (1+x)^m\leq e^{mx}.$
\end{lemma}
The proof of this result follows by applying Taylor's theorem with $f(x)=e^x, \ x_0=0$, and $n=1.$
\begin{lemma}\label{mainlemma}
	if $M_1\geq-1$ and $M_2\geq0$ are real numbers and $\left\{a_n\right\}^{N}_{n=0}$is a sequence with $a_0 \geq0$ such that \begin{equation}  a_{n+1}\leq(1+M_1)a_n+M_2 , \ \forall n=0, 1, 2, \dots, N-1,
		\label{14}
	\end{equation} then, 
	\begin{equation}\label{upperbound-for-an} a_{n+1}\leq e^{(N+1)M_1}\bigg(\frac{M_2}{M_1}+a_0\bigg)-\frac{M_2}{M_1}, \ \forall n=0, 1, 2, \dots ,N-1. \end{equation}
	\label{4.1} 
\end{lemma}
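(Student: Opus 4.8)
The plan is to prove the recursive bound \eqref{upperbound-for-an} by induction on $n$, which is the standard discrete Gr\"onwall argument. First I would treat the base case: for $n=0$, the hypothesis \eqref{14} gives $a_1 \leq (1+M_1)a_0 + M_2$, and I would check that the right-hand side of \eqref{upperbound-for-an} with $n=0$, namely $e^{(N+1)M_1}\bigl(\tfrac{M_2}{M_1}+a_0\bigr)-\tfrac{M_2}{M_1}$, dominates it; since $e^{(N+1)M_1} \geq (1+M_1)$ by Lemma~\ref{binomialexponential} (using $N+1 \geq 1$ and $M_1 \geq -1$) and $\tfrac{M_2}{M_1}+a_0 \geq 0$, the base case follows after a short manipulation.

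For the inductive step, I would assume \eqref{upperbound-for-an} holds for some index, apply \eqref{14} once more, and substitute. The cleanest route is to rewrite the recursion by adding $\tfrac{M_2}{M_1}$ to both sides: setting $b_n := a_n + \tfrac{M_2}{M_1}$, the inequality $a_{n+1} \leq (1+M_1)a_n + M_2$ becomes $b_{n+1} \leq (1+M_1) b_n$. Iterating this gives $b_{n+1} \leq (1+M_1)^{n+1} b_0$, and then Lemma~\ref{binomialexponential} with $m = n+1 \leq N+1$ (and noting $1+M_1 \geq 0$, so the exponent may be enlarged to $N+1$) yields $b_{n+1} \leq e^{(N+1)M_1} b_0$. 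Translating back through $b_n = a_n + \tfrac{M_2}{M_1}$ gives exactly \eqref{upperbound-for-an}.

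The main point requiring care is the sign of $M_1$: the statement allows $M_1 \geq -1$, so $M_1$ could be negative or zero, and dividing by $M_1$ in $\tfrac{M_2}{M_1}$ is only legitimate when $M_1 \neq 0$. I would therefore either tacitly assume $M_1 > 0$ (which is the only case used in the global error estimate, where $M_1$ plays the role of a positive multiple of a step size) or handle $M_1 = 0$ separately, where the recursion reads $a_{n+1} \leq a_n + M_2$ and telescopes to $a_{n+1} \leq a_0 + (n+1)M_2$, which is the natural limiting form of \eqref{upperbound-for-an} as $M_1 \to 0^+$. When $M_1 > 0$ the substitution trick is completely routine and the only genuine ingredient is Lemma~\ref{binomialexponential} to pass from the power $(1+M_1)^{n+1}$ to the exponential; monotonicity of $t \mapsto (1+M_1)^t$ for $1+M_1 \geq 1$ lets me replace the exponent $n+1$ by $N+1$ uniformly.

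I expect no serious obstacle here — the result is a textbook discrete Gr\"onwall inequality — so the write-up will be short. The one place to be scrupulous is ensuring every quantity appearing is nonnegative when invoking Lemma~\ref{binomialexponential} (which requires the base to be $\geq 0$, guaranteed by $M_1 \geq -1$) and when enlarging the exponent (which requires the base to be $\geq 1$, i.e.\ $M_1 \geq 0$); if one only knows $M_1 \geq -1$ with $-1 \leq M_1 < 0$ the final bound as stated can fail, so I would note that the intended regime is $M_1 \geq 0$.
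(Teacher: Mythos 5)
Your proof is correct and follows essentially the same route as the paper: the substitution $b_n = a_n + \tfrac{M_2}{M_1}$ is just a repackaging of the paper's direct iteration of \eqref{14} followed by summation of the geometric series, and both arguments finish by invoking Lemma~\ref{binomialexponential} to pass from $(1+M_1)^{n+1}$ to $e^{(N+1)M_1}$. Your caveat that enlarging the exponent from $n+1$ to $N+1$ (and the nonnegativity of $a_0+\tfrac{M_2}{M_1}$) genuinely requires $M_1>0$ rather than the stated $M_1\geq -1$ is well taken; the paper's own proof silently needs the same restriction, which is harmless because only positive $M_1$ occurs in the global error estimate.
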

\begin{proof}
	Fix a positive integer $n$, then \eqref{14} can be written as 
	\begin{align*}
		a_{n+1} &\leq (1+M_1)a_{n}+M_2 \\ &\leq (1+M_1)\big[ (1+M_1)a_{n-1}+M_2\big] a_{n}+M_2 \\ & \ \ \vdots \\ &\leq (1+M_1)^{n+1}a_0+\big[1+(1+M_1)+(1+M_1)^2+\dots+(1+M_1)^n\big]M_2 \\ &\leq (1+M_1)^{n+1}a_0+\Big[\sum_{j=0}^n(1+M_1)^j\Big]M_2 \\
		&\leq (1+M_1)^{n+1}a_0+\Big[\displaystyle\frac{1-(1+M_1)^{n+1}}{1-(1+M_1)}\Big]M_2 \hspace{2.4cm} \text{(sum of geometric series)} \\ &\leq (1+M_1)^{n+1}a_0+\Big[(1+M_1)^{n+1}-1\Big]\frac{M_2}{M_1} \\ &\leq (1+M_1)^{n+1}\Big(a_0+\frac{M_2}{M_1}\Big)-\frac{M_2}{M_1}.
	\end{align*}
	By Lemma \ref{binomialexponential}, equation \eqref{upperbound-for-an} follows, i.e., 
	\begin{align*}
		a_{n+1}\leq e^{(1+N)M_1}\Big(a_0+\frac{M_2}{M_1}\Big)-\frac{M_2}{M_1}.
	\end{align*}
\end{proof}
Now if we add the two inequalities in (11) together, we will have
\begin{equation}
	\begin{split}
		(\epsilon'_{n+1})^2 + (\epsilon_{n+1})^2 
		&\leq (\epsilon'_n)^2+(\epsilon_n)^2+2h_{n+1}^2(\epsilon'_n)^2+2\big[\hat{a}(t_n,x_n,x'_n)-\hat{a}(t_n,y_n,y'_n)\big]^2 h_{n+1}^2 \\
		&+ 2(L_n^{(1)})^2+2(L_n^{(2)})^2+2\epsilon_n\epsilon'_n h_{n+1} +2\epsilon_n L_n^{(2)} \\
		&+ 2\epsilon'_n\Bigg(\Big(\hat{a}(t_n,x_n,x'_n)-\hat{a}(t_n,y_n,y'_n)\Big)h_{n+1}+L_n^{(1)}\Bigg) \\ 
		&\leq (\epsilon'_n)^2+(\epsilon_n)^2+2h_{n+1}^2(\epsilon'_n)^2+8C_0^2(\epsilon'_n)^2\Big(\frac{h_{n+1}}{1-t_{n+1}}\Big)^2 \\ 
		&+ 8A_1^2T_1^2\epsilon_n^2\Big(\frac{h_{n+1}}{1-t_{n+1}}\Big)^2+8T_1^2\epsilon_n^2(\epsilon'_n)^2\Big(\frac{h_{n+1}}{1-t_{n+1}}\Big)^2+8T_2^2\epsilon_n^2h_{n+1}^2 \\
		&+ 2D_1\Big(\frac{h_{n+1}}{1-t_{n+1}}\Big)^4+2D_2\Big(\frac{h_{n+1}}{1-t_{n+1}}\Big)^2h_{n+1}^2+2\epsilon_n\epsilon'_nh_{n+1} \\
		&+ 2\epsilon'_n\sqrt{D_1}\Big(\frac{h_{n+1}}{1-t_{n+1}}\Big)^2+2\epsilon_n\sqrt{D_2}\Big(\frac{h_{n+1}}{1-t_{n+1}}\Big)h_{n+1} \\
		&+ 2(\epsilon'_n)^2C_0\Big(\frac{h_{n+1}}{1-t_{n+1}}\Big)+2A_1T_1\epsilon_n\epsilon'_n\Big(\frac{h_{n+1}}{1-t_{n+1}}\Big) \\
		&+ 2T_1\epsilon_n(\epsilon'_n)^2\Big(\frac{h_{n+1}}{1-t_{n+1}}\Big)+2T_2\epsilon_n\epsilon'_nh_{n+1} \\
		&\leq \bigg[ K_1 h_{n+1}^2+K_2 \Big(\frac{h_{n+1}}{1-t_{n+1}}\Big)^2+h_{n+1}+K_3 \Big(\frac{h_{n+1}}{1-t_{n+1}}\Big) \bigg] ||\epsilon_n||^2 \\
		&+ 2D_1\Big(\frac{h_{n+1}}{1-t_{n+1}}\Big)^4+2D_2\Big(\frac{h_{n+1}}{1-t_{n+1}}\Big)^4+ K_4\epsilon_n(\epsilon'_n)^2\Big(\frac{h_{n+1}}{1-t_{n+1}}\Big) \\
		&+ 2\Big(\frac{h_{n+1}}{1-t_{n+1}}\Big)^2\Big[\sqrt{D_1}+\sqrt{D_2}\Big]\sqrt{(\epsilon'_n)^2+(\epsilon_n)^2}+K_5^2\epsilon_n^2(\epsilon'_n)^2\Big(\frac{h_{n+1}}{1-t_{n+1}}\Big)^2. 
	\end{split}
\end{equation}
Using the definition of the norm  $\|\epsilon_n\|=\sqrt{(\epsilon'_n)^2+(\epsilon_n)^2}$ , then system (13) can be simplified as 
$$(\epsilon'_{n+1})^2+(\epsilon_{n+1})^2\leq (\epsilon'_n)^2+(\epsilon_n)^2+m_1(\hat{h})\Big[(\epsilon'_n)^2+(\epsilon_n)^2\Big]+m_2(\hat{h})^3$$
where $m_1$ and $m_2$ are independent constants of $h_{n+1}$ and $t_{n+1}.$
Now we apply Lemma \ref{mainlemma} for $a_n=\|\epsilon_n\|^2$,\ followed by a foundation for the step size order, with $M_1=1+m_1(\hat{h})$ and $M_2=m_2(\hat{h})^3$ such that if 
\begin{align*}
	\|\epsilon_{n+1}\|^2 \leq \|\epsilon_n\|^2 +M_1 \|\epsilon_n\|^2 + M_2 = (1+M_1)\|\epsilon_n\|^2+M_2
\end{align*}
then we have 
\begin{align}\label{globalerrorinequality}
	\|\epsilon_{n+1}\|^2 \leq e^{NM_1} \Big(\frac{M_2}{M_1}+\|\epsilon_0\|^2\Big) -\frac{M_2}{M_1} = (e^{NM_1}-1) \frac{M_2}{M_1}.
\end{align}
The following theorem can assur the variable step size and the uniform convergence for solutions of the method.
\begin{theorem}\label{mainthm}
	Given that the singular boundary value problem in \eqref{Lane} satisfies the upper bounds assumption in $(17a)$-$(17d)$, then the successive approximation \eqref{numerical value y} with variable step sizes \eqref{stepsize} as $\hat{h}\to 0$, has $O((\hat{h})^2)$, converges uniformly in $n$ for $t_n < 1-\delta < 1$, and thus the global pointwise error for the above proposed algorithm is of order $O(\hat{h})$.
\end{theorem}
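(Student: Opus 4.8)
The plan is to start from the discrete error recursion that has already been assembled --- inequality (13), equivalently its condensed form $\|\epsilon_{n+1}\|^2\le(1+M_1)\|\epsilon_n\|^2+M_2$ --- and then to close the argument in two moves: first invoke the Gronwall-type estimate of Lemma \ref{mainlemma}, and then use the arithmetic of the variable step size \eqref{stepsize} to show that every quantity entering that estimate stays of the right order as $\hat h\to0$. I would take $a_n=\|\epsilon_n\|^2$ in Lemma \ref{mainlemma}, note that $a_0=\|\epsilon_0\|^2=0$ since the exact data $x(0)=1,\ x'(0)=0$ are reproduced verbatim by \eqref{numerical value y}, and record from (13) that $M_1=m_1(\hat h)$ and $M_2=m_2(\hat h)^3$, where --- after the substitutions $h_{n+1}/(1-t_{n+1})=\hat h$ and $h_{n+1}=\hat h(1-t_{n+1})\le\hat h$ dictated by \eqref{stepsize} --- one has $c_1\hat h\le M_1\le C_1\hat h$ and $M_2\le C_2\hat h^3$ with $c_1>0$ and $C_1,C_2$ independent of $n$ and $\hat h$ (for $\hat h$ small). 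The nonquadratic leftovers in (13) --- the cubic terms such as $K_4\,\epsilon_n(\epsilon'_n)^2(h_{n+1}/(1-t_{n+1}))$ and the cross terms linear in $\|\epsilon_n\|$ but carrying a factor $\hat h^2$ --- I would dispose of by a short bootstrap: assuming inductively $\|\epsilon_n\|\le1$ turns $|\epsilon_n(\epsilon'_n)^2|$ into a term $\le\|\epsilon_n\|^2$, and one use of $2ab\le a^2+b^2$ splits each linear cross term into an $O(\hat h)\|\epsilon_n\|^2$ piece absorbed into $M_1$ and an $O(\hat h^3)$ piece absorbed into $M_2$.

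The heart of the proof is the description of the mesh produced by \eqref{stepsize}. From $t_{n+1}=t_n+\hat h(1-t_{n+1})$ with $t_0=0$ one solves $1-t_{n+1}=(1-t_n)/(1+\hat h)$, hence $1-t_n=(1+\hat h)^{-n}$ and $h_{n+1}=\hat h(1+\hat h)^{-(n+1)}\le\hat h$ for every $n$. Two consequences follow at once. First, the singular double integrals are tamed: $(L^{(1)}_n)^2\le D_1 h_{n+1}^4/(1-t_{n+1})^4$ and $(L^{(2)}_n)^2\le D_2 h_{n+1}^4/(1-t_{n+1})^2$ become $D_1\hat h^4$ and $D_2\hat h^4(1-t_{n+1})^2\le D_2\hat h^4$, both $O(\hat h^4)$ \emph{uniformly} in $n$, which is why only $\hat h^3$-order residuals survive in $M_2$. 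Second, the admissibility window $t_n<1-\delta$ is equivalent to $(1+\hat h)^{-n}>\delta$, i.e. $n<\ln(1/\delta)/\ln(1+\hat h)$; since $\ln(1+\hat h)\ge\hat h/2$ for $\hat h$ small, the number $N$ of admissible steps satisfies $N\hat h\le 2\ln(1/\delta)=:C_\delta$, a constant depending only on $\delta$.

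Combining the pieces finishes the theorem. Lemma \ref{mainlemma} together with $a_0=0$ gives, exactly as in \eqref{globalerrorinequality}, $\|\epsilon_{n+1}\|^2\le(e^{NM_1}-1)\,M_2/M_1$. From $M_1\le C_1\hat h$ and $N\hat h\le C_\delta$ we get $NM_1\le C_1C_\delta$, so $e^{NM_1}$ is bounded by a constant depending only on $\delta$, uniformly over all admissible $n$ and all small $\hat h$; from $M_1\ge c_1\hat h$ and $M_2\le C_2\hat h^3$ we get $M_2/M_1\le(C_2/c_1)\hat h^2$. Hence $\|\epsilon_n\|^2\le(e^{C_1C_\delta}-1)(C_2/c_1)\,\hat h^2=O(\hat h^2)$, the implied constant uniform over $\{n:t_n<1-\delta\}$, so that taking square roots yields $\|\epsilon_n\|=O(\hat h)$. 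Since this bound tends to $0$ with $\hat h$ uniformly in $n$, the Euler iterates (and their piecewise-linear interpolant) converge uniformly to $x$ on $[0,1-\delta]$ in the sense of the uniform-convergence definition recalled in Section 2, with global pointwise error of order $O(\hat h)$.

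The step I expect to be the main obstacle is controlling the exponential factor $e^{NM_1}$: refining the mesh forces $N\sim\hat h^{-1}$, so the estimate only survives if $M_1$ is genuinely $O(\hat h)$ rather than merely $o(1)$ --- keeping $NM_1$ bounded --- while \emph{simultaneously} $M_1$ must be bounded below by a positive multiple of $\hat h$ so that the residual $M_2/M_1$ collapses to $O(\hat h^2)$ rather than to something larger. Extracting all of $h_{n+1}/(1-t_{n+1})\equiv\hat h$, $h_{n+1}\le\hat h$, and $N\hat h\le C_\delta$ from the single relation \eqref{stepsize} is the crux, and it is exactly what forces a geometrically graded mesh rather than a uniform one. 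A secondary, bookkeeping-level point is making the bootstrap legitimate, i.e. checking that $\|\epsilon_n\|$ stays below the threshold at which the cubic terms can be absorbed, which one verifies by feeding the $O(\hat h^2)$ bound back into the induction for $\hat h$ small enough.
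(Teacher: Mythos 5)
Your proposal is correct and follows essentially the same route as the paper: condense (13) into $\|\epsilon_{n+1}\|^2\le(1+M_1)\|\epsilon_n\|^2+M_2$ with $M_1=O(\hat h)$, $M_2=O(\hat h^3)$, apply Lemma \ref{mainlemma} with $\|\epsilon_0\|=0$, and use the graded mesh \eqref{stepsize} to keep $N\hat h$ bounded by a constant depending only on $\delta$, giving $\|\epsilon_n\|^2=O(\hat h^2)$ and hence pointwise error $O(\hat h)$. In fact your version is tighter than the paper's at the bookkeeping level (the exact relation $1-t_n=(1+\hat h)^{-n}$ rather than the paper's $(1-\hat h)^N=\delta$, and the explicit absorption of the cross and cubic terms via a bootstrap), so no gap remains.
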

\begin{proof}
	If we have $N$ steps, \eqref{stepsize} gives $(1-\hat{h})^N=\delta$, and thus
	$N=\displaystyle\frac{\ln\delta}{\hat{h}}=\displaystyle\frac{-\ln\delta}{\ln(1-\hat{h})}$, whenever $h^*\to 0$.
	Then by using Lemma \ref{mainlemma}
	on \eqref{globalerrorinequality}, we have
	\begin{align*}
		\|\epsilon_{n}\|^2 
		&\leq\bigg[e^{\Big(-N \big(1+m_1(\hat{h})\big)\Big)}-1\bigg] \frac{m_2(\hat{h})^3}{1+m_1(\hat{h})} \\ &\leq \frac{D}{\delta^{m_1}}
		(\hat{h})^2
	\end{align*}
	where $D$ and $M_1$ are constants that do not depend on $n,\hat{h}$ or $\delta.$
\end{proof}



\section{\textbf{Simulation and Numerical Experiments}}
\maketitle
In this section we run the algorithm over some examples to show the validity of the method. We used MATLAB with bulit-in functions such as; {\em ode45} and \emph{EulerSolver}  

\begin{example}
	Consider the second order differential equation \eqref{Lane} with $a(t,x)=\sin x$, and\  $g(t,x)=x^5$, \ where the step size is $0.05$ and time interval $[0,1]$ along with initial conditions 
	$x(0)=0, \ x'(0)=2;$ i.e., 
	\begin{equation*}\frac{d^2 x}{d t^2}=\frac{-\sin x}{1-t}\frac{d x}{d t}+x^5 
	\end{equation*}
\end{example}
Table 1 compares the two dependent solutions $x(t)$ and $x'(t)$ for equation \eqref{Lane} given the above numerical values, and figures below draw the relationships between trajectories of the differential equation and the time.
\begin{table}[h]
	\begin{tabular}{|c|c|c|c|c|c|c|c|c|c|c|c|}
		\hline 
		Time & 0 & 
		0.1 & 
		0.2 & 
		0.3 & 
		0.4 & 
		0.5 & 
		0.6 & 
		0.7 & 
		0.8 & 
		0.9 & 
		1 \\\hline
		$x(t)$ & 0 & 
		0.2005 & 
		0.4055 & 
		0.62081 & 
		0.85335 & 
		1.1111 & 
		1.3984 & 
		1.7017 & 
		1.957 & 
		2.0312 & 
		1.7276 \\\hline
		$x'(t)$ & 2 & 
		2.0105 & 
		2.0682 & 
		2.186 & 
		2.3787 & 
		2.6495 & 
		2.9446 & 
		3.026 & 
		2.293 & 
		0.09581 & 
		-4.4824 \\\hline
	\end{tabular}
	\caption{the solutions $x, x'$ for Lane-Emden equation with time interval $[0,1].$}
\end{table}
The analytical solution to this problem is somewhat lower than our approximation. By shrinking the size of the interval $\Delta t$, we could calculate a more accurate estimate.
\begin{center}
	\includegraphics[width=.75\textwidth]{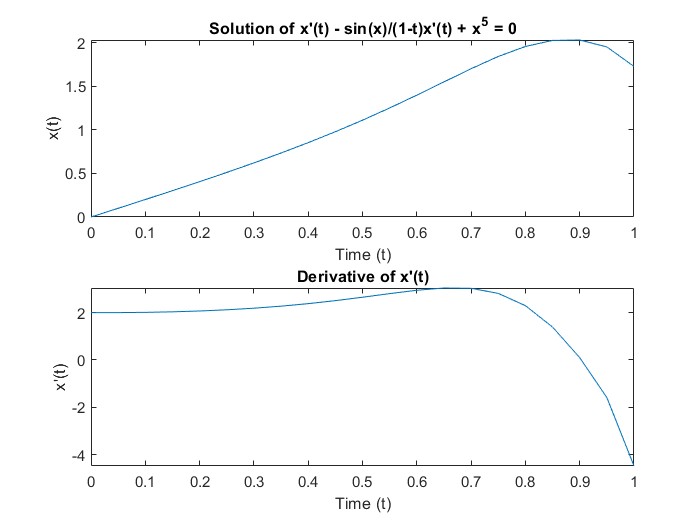}
\end{center}

\begin{example}
	Consider equation \eqref{Lane} with $a(t,x)=tx$, and\  $g(t,x)=x^3$, \ where the step size is $\Delta t=0.1$ and same time interval $[0,1]$ along with initial conditions
	$x(0)=0, \ x'(0)=2;$ i.e., 
	\begin{equation*}\frac{d^2 x}{d t^2}=\frac{tx}{1-t}\frac{d x}{d t}+x^3 
	\end{equation*}
	\begin{center}
		\includegraphics[width=.75\textwidth]{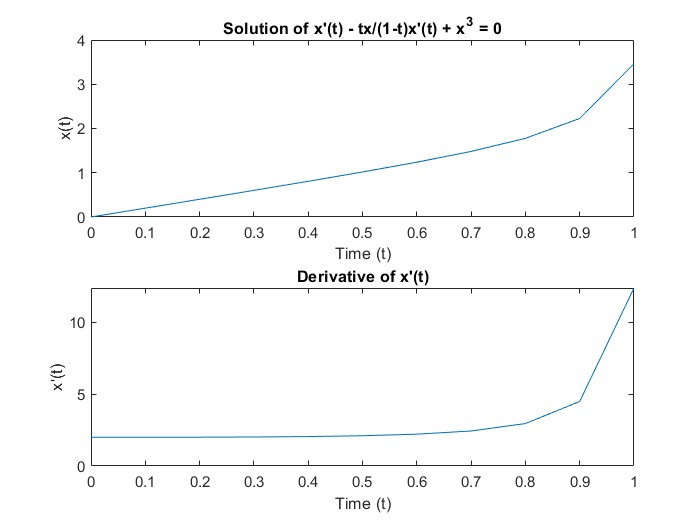}
	\end{center}
\end{example}

\begin{example}
	Consider a constant function $a(t,x)=3$ in Lane-Emden equation \eqref{Lane} with $g(t,x)=e^tx$, \ where the step size is $\Delta t=0.05$ and the time interval is $[0,2]$ along with initial conditions given as, $x(0)=0, \ x'(0)=2;$ i.e., 
	\begin{equation*}\frac{d^2 x}{d t^2}=\frac{3}{1-t}\frac{d x}{d t}-e^tx. 
	\end{equation*}
	\begin{center}
		\includegraphics[width=.75\textwidth]{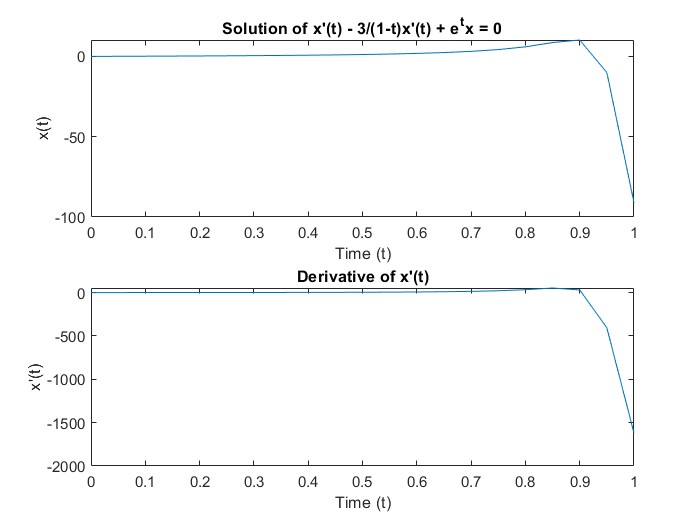}
	\end{center}
\end{example}

\begin{example}\label{example4}
	Consider the second-order dynamic equation \eqref{Lane} with $a(t,x)=2t$, and\  $g(t,x)=tx^2$, \ where the step size is $0.01$ (which can enlarged to help decrease the error estimates) and time interval $[0,1]$ along with initial conditions $x(0)=0, \ x'(0)=1;$ i.e., 
	\begin{equation*}\frac{d^2 x}{d t^2}=\frac{2t}{1-t}\frac{d x}{d t}+tx^2. 
	\end{equation*}
	\begin{center}
		\includegraphics[width=.75\textwidth]{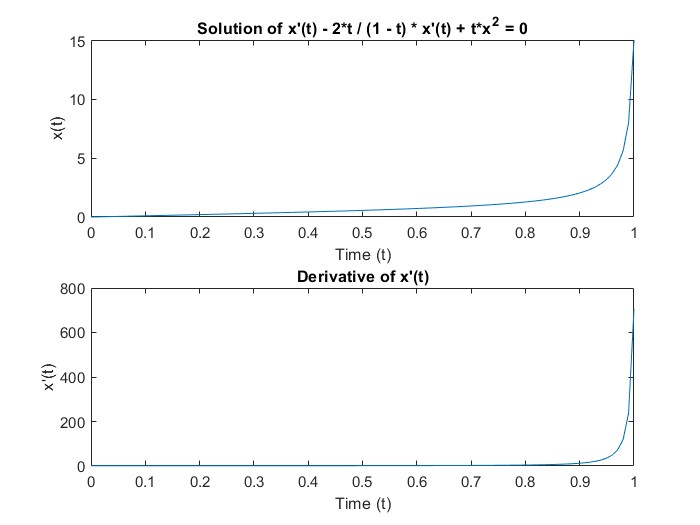}
	\end{center}
\end{example}
%
\vspace{.7cm}

\begin{figure}
	\begin{example}
		In this example we consider the non-autonomous inhomogeneous second order system with the right-hand side being $t^3e^{2t}, \ a(t,x)=4$, and\  $g(t,x)=4x$, \ where the step size is $0.01$ and along with initial conditions $x(0)=0, \ x'(0)=0;$ with the absence of singularity. The graphs shown below and the tables as well. 
	\end{example}
	\begin{minipage}{0.35\textwidth}
		\includegraphics[width=1.3\textwidth]{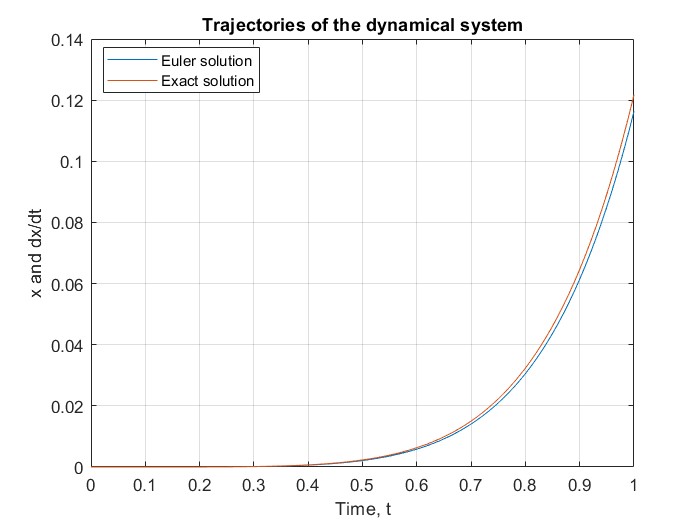}\caption{\small Comparison between approximated solution by Euler's method and the actual solution for the equation $x''+4x'+4x=t^3e^{2t}$.}
	\end{minipage}
	\hspace{1.1cm}
	\begin{minipage}{0.15\textwidth}
		\begin{tabular}{|l|c|c|c|c|}
			\hline
			t &	x & y(Euler) & y(exact) & Absolute error\\\hline
			0 &	0.500000 & 0.550000 & 0.588250 & 0.03825\\\hline
			0.2 & 0.618326 & 0.642485 & 0.662213 & 0.019728\\\hline
			0.4 & 0.678516 & 0.692098 & 0.703465 & 0.011367\\\hline
			0.3 & 0.712985 & 0.720934 & 0.727519 & 0.006585\\\hline
			0.4 & 0.732901 & 0.737205 & 0.740529 & 0.003324\\\hline
			0.6 & 0.742951 & 0.744533 & 0.745325 & 0.000792\\\hline
			0.8 & 0.745363 & 0.744600 & 0.743002 & 0.001598\\\hline 
			1.0 & 0.500000 & 0.425000 & 0.367225 & 0.057775\\\hline
			1.2 & 0.321304 & 0.283689 & 0.251965 & 0.031724\\\hline 
			1.4 & 0.224446 & 0.199932 & 0.177544 & 0.022388\\\hline 
			1.6 & 0.156632 & 0.136705 & 0.117386 & 0.019319\\\hline 
			1.8 & 0.098381 & 0.079456 & 0.060422 & 0.019034\\\hline 
			2.0 & 0.041116 & 0.021389 & 0.001065 & 0.020324\\\hline  
		\end{tabular}
	\end{minipage}
\end{figure}
\section{Conclusion and Extensions} In this paper our primary goal was to investigate the second-order singular Lane Emden type equations and we have successfully arrived at the solutions by the forward Euler's algorithm combined with the shooting method, which in turn, reduces the boundary value problem into initial value problem, so the method showed that it is a precise and time-saving method. The Lane Emden equations are solved for the values of the polytropic indices varies from 1, 2, 3 and 5 with having constants, linear functions and periodic functions in the drift term. The numerical solution of the problem for these values of indices replaces the unsolvable version of equation and any closed form solution that we wish to find. For the case of n = 2 the solution is obtained as an infinite power series. Graphical representations of these results give us information about polytropes for different values of polytropic indices which may be helful in the study of the behavior of stellar structures in astrophysics. One good extension for this work is through implementing backward Euler formula for a second-order differential equations where the recursion formula is the same, except that the dependent variable is a vector. Another possible modification for the work is by using the reliable Runge–Kutta method which promises accurate results in deriving the solutions of the Lane Emden equations. 
It is also significant in handling highly nonlinear differential equations with less computations and a larger  interval of convergence. For thinking globally, finite difference methods may be used to replace the shooting method to treat the boundary value problem. Finally, we may think of adding the additive noice to the second order differential equation (it will be called stochastic differential equation) and in this case, Euler's method will be replaced by Euler-Maruyama Algorithm, see, for instance, \cite{HughesGreg, HughesLoch}.


\subsection*{Acknowledgments and Declarations} The author would like to express his gratitude to professor Randy Hughes, a professor at Southern Illinois University Carbondale, for suggesting the problem and providing valuable advises along the way of writing this manuscript. The author declares that there was no conflict of interest or competing interest.
\medskip


%
%


\begin{thebibliography}{99}
	
	\addcontentsline{toc}{chapter}{References}
	\frenchspacing
	\bibitem{Jedara}
	Ala'yed, O., Saadeh, R., Qazza, A.: Numerical solution for the system of Lane-Emden type equations using cubic B-spline method arising in engineering. AIMS Mathematics. 8, 6, (2023).
	
	\bibitem{Ali}
	Ali, K.K., Mehanna, M.S., Abdelrahman, M.I., Shaalan, M.A.: Analytical and Numerical solutions for fourth order Lane-Emden-Fowler equation. Partial Differential Equations in Applied Mathematics. 6, (2022).
	
	\bibitem{Asadpour}
	Asadpour, A., Hosseinzadeh, H., Yazdani, A.: Numerical Solution of the Lane-Emden Equations with Moving Least Squares Method. Applications and Applied Mathematics. 14, 2 (2019).
	
	\bibitem{Barr}
	Barreira, L., Valls, C.: Ordinary Differential Equations. 
	American Mathematical Society (2010).
	
	\bibitem{Bataineh}
	Bataineh, A.S., Noorani, M.S.M, Hashim, I.: Homotopy analysis method for singular IVPs of Emden-Fowler type. Communications in nonlinear science and numerical simulation. 14, 1121-1131 (2009).
	
	\bibitem{Bile}
	Biles, D.C., Robinson, M.P., Sparker, J.S.: A generalization of the Lane-Emden equation. J. Math. Anal. Appl. 654-666 (2002).
	
	\bibitem{Burd}
	Burden, R.L., Faires, J.D.: Numerical Analysis. Brooks/Cole Publishing Company. (2011).
	
	\bibitem{Chandra}
	Chandrasekhar, S.: An Introduction to the Study of Stellar Structure. Dover Publications, New York (1967).
	
	\bibitem{ColliTee}
	Collings A.G., Tee G.J.: Stability and accuracy of the generalized Euler method for ordinary differential equations, with reference to structural dynamics problems. Engineering Structures. 2, 99-108 (1979).
	
	\bibitem{Datt}
	Datta, B.K.: Analytic solution to the Lane-Emden equation. Nuovo Cimento. 111, 1385-1388 (1996).
	
	\bibitem{Davis}
	Davis, H.T.: Introduction to nonlinear differential and integral equations. Dover Publications Inc. New York (1962).
	
	\bibitem{HughesGreg}
	Gregory, J., Hughes, H.R.: New general methods for numerical stochastic differential equations. Utilitas Mathematica. 63, 53-64 (2003).
	
	\bibitem{He}
	He, J.H.: Variational approach to the Lane-Emden equation. Applied Mathematics and computation. 143, 539-541 (2003).
	
	\bibitem{Herron}
	Herron, I.H.: Solving singular boundary value problems for ordinary differential equations. Carrib J. Math Comput. Sci. 1-30 (2013).
	
	\bibitem{HughesLoch}
	Hughes, H.R., Siriwardena, L.P.: Efficient variable step size approximations for strong solutions of stochastic differential equations with additive noise and time singularity. International Journal of Stochastic Analysis. (2014).
	
	
	\bibitem{Karimi}
	Karimi, S.V., Aminataei, A.: On the numerical solution of differential equations of Lane-Emden type. Computers and Mathematics with applications. 59, 2815-2820 (2010).
	
	\bibitem{Kloeden}
	Kloeden, P.E., Platen, E.: Numerical solution of stochastic differential equations. Applications of Mathematics, Stochastic Modelling and Applied Probability. Springer, New York. 23, (1992).
	
	\bibitem{Othmar}
	Koch, O. and Weinmüller, E.B., The convergence of shooting methods for singular boundary value problems. Mathematics of Computation. 289-305, 2003.
	
	\bibitem{Maruy}
	Maruyama, G.: Continuous Markov processes and stochastic equations. Rendiconti del circolo matematico di palermo. 4, 48-90 (1955).
	
	\bibitem{Motsa}
	Motsa, S.S., Shateye, S.: New analytic solution to the Lane-Emden equation of index $2.$ Mathematical problems in engineering. (2012).
	
	
	\bibitem{KouroshAmin}
	Parand K., Ghaderi, A.: Two efficient computational algorithms to solve the singularly perturbed Lane-Emden problem. arxiv.org/abs/1708.07384. (2017).
	
	\bibitem{PeterOlver}
	Peter J.O.: Introduction to Partial Differential Equations. Springer, (2014).
	
	\bibitem{Ramos}
	Ramos, J.I.: Linearization techniques for singular initial-value problems of ordinary differential equations. Applied Mathematics and Computation. 161, 525-542 (2005).
	
	\bibitem{Royden}
	Royden, H.L.: Real analysis. New York (1988).
	
	\bibitem{Russ}
	Russell, R.D., Shampine, L.F.: Numerical methods for singular boundary value problems. J. Num. Anal. 13-36 (1975).
	
	\bibitem{sand}
	Sandile, S.M., Precious, S.I.: A New Algorithem for Solving Singular IVPs of Lane-Emden type. Latest Trends on Applied Mathematics, Simulation, Modelling. 176-180 (2010).
	
	\bibitem{Wazwaz}
	Wazwaz, A.: Adomian decomposition method for a reliable treatment of the Emden-Fowler equation. Applied Mathematics and computation. 161, 543-560 (2005).
	
	
\end{thebibliography}
\end{document}